\theoremstyle{plain}
\newtheorem{theo}{Theorem}[section]
\newtheorem*{theo*}{Theorem}
\newtheorem{coro}[theo]{Corollary}
\newtheorem{prop}[theo]{Proposition}
\newtheorem{lemm}[theo]{Lemma}
\newtheorem{theomain}[theo]{Theorem}
\newtheorem{coromain}[theo]{Corollary}
\theoremstyle{definition}
\newtheorem{defi}[theo]{Definition}
\newtheorem{rema}[theo]{Remark}
\newcommand*{\dd}%
  {\relax\ifnum\lastnodetype>0\mskip\medmuskip\fi\mathrm{d}}
\newcommand{\fspace}[1]{\mathcal{#1}}
\newcommand{\fX}{\fspace{X}}
\newcommand{\fY}{\fspace{Y}}
\newcommand{\BX}{\operatorname{\fspace{B}}(\fX)}
\newcommand{\op}[1]{\mathrm{#1}}
\newcommand{\Id}{\operatorname{Id}}
\newcommand{\llip}{\operatorname{\lvert D\rvert}}
\newlength{\hypobox}
\newlength{\gapbox}
\newcounter{hypo}
\renewcommand{\thehypo}{\textup{(H\arabic{hypo})}}
\newcounter{hypop}
\renewcommand{\thehypop}{\textup{(H\arabic{hypop}')}}
\newcounter{gap}
\renewcommand{\thegap}{\textup{(SG\arabic{gap})}}
\title{Effective perturbation theory \\ for simple isolated eigenvalues \\ of
linear operators}
\author{Beno\^{\i}t R. Kloeckner \thanks{Universit\'e Paris-Est, Laboratoire d'Analyse et de Mat\'ematiques Appliqu\'ees (UMR 8050), UPEM, UPEC, CNRS, F-94010, Cr\'eteil, France}}
\begin{document}

\maketitle

\begin{abstract}
We propose a new approach to the  spectral theory of perturbed linear operators in the case of a simple isolated eigenvalue. We obtain two kinds of results: ``radius bounds'' which ensure perturbation theory applies for perturbations up to an explicit size, and ``regularity bounds'' which control the variations of eigendata to any order. Our method is based on the Implicit Function Theorem and proceeds by establishing differential inequalities on two natural quantities: the norm of the projection to the eigendirection, and the norm of the reduced resolvent. We obtain completely explicit results without any assumption on the underlying Banach space.

In companion articles, on the one hand we apply the regularity bounds to Markov chains, obtaining  non-asymptotic concentration and Berry-Esseen inequalities with explicit constants, and on the other hand we apply the radius bounds to transfer operators of intermittent maps, obtaining explicit high-temperature regimes where a spectral gap occurs.
\end{abstract}


\section{Introduction}\label{sec:intro}

Let $\fX$ be a real or complex Banach space and denote by $\mathbb{K}$ the field of scalars and by $\BX$ the space of bounded linear operator acting on $\fX$, endowed with the operator norm.
Given an operator $\op{L}_0\in\BX$,\footnote{The case of a closed operator can be treated similarly using the graph norm on its domain.} it is a natural and old set of problems to ask how its spectral properties change under perturbation, i.e. when one considers $\op{L}=\op{L}_0+\op{M}$ where $\op{M}$ is small in operator norm. A particularly important question with many applications, for example in the study of Markov chains and of transfer operators of dynamical systems, is the analytic dependency of a simple, isolated eigenvalue with the perturbation.

This question is often considered for ``Gâteaux'' perturbations, i.e. of the form $t\mapsto \op{L}_t$, $t\in(-\varepsilon,\varepsilon)$ and in a purely asymptotic form. At least in some fields, authors often refer to the book of Kato \cite{Kato} (see also \cite{DS}), without using the quantitative statements that are present there (written in the finite-dimension chapters, but adaptable to infinite-dimensional spaces). Kato uses contour integrals, as introduced by Sz.-Nagy \cite{Sz.-Nagy} but it has been noticed by Rosenbloom \cite{Rosenbloom} that one can use the implicit function theorem to easily obtain similar results, and this will serve as a starting point.

We shall distinguish two types of quantitative statements: one can estimate the allowable size of a perturbation below which an analytic simple isolated eigenvalue is ensured (\emph{radius estimates}), or bound the variations or the iterated derivatives of the eigenvalue and other eigendata (\emph{regularity estimates}). Radius estimate are present in various works. We refer to \cite{Baumgartel}, notably page 322, for an account which is about as complete as we could give; more recent references are \cite{Farid} and \cite{Nair}.
Regularity estimates are much less common, the only one I know of being in \cite{Rosenbloom} (Corollary 1a.); it is quite involved and insufficient for some applications (see e.g. Remark \ref{rema:BE}). 

The goal of this article is to produce radius and regularity estimates that:
\begin{enumerate}
\item hold in any Banach space,
\item are uniform over directions (as in ``Fréchet'' dérivatives), non-asymptotic and entirely explicit,
\item control higher order derivatives of eigendata (for the regularity estimates).
\end{enumerate}
The motivations for these properties lie in the applications developed in dynamical systems \cite{K:HT} and probability theory \cite{K:concentration}, which both rely on the present article. The operators to be perturbed (transfer operator and Markov averaging operator) are not self-adjoint and in fact very often act on spaces which are not Hilbertian, thus preventing the use of a large part of the literature (e.g. pseudo-spectrum). The uniformity and explicitness is also crucial, as the applications are about effective results. The control of higher order derivatives is crucial to \cite{K:concentration}, where the spectral method is needed up to order $3$. To the best of my knowledge none of the numerous previously known estimates have these qualities.

To achieve this we mostly rely on the comparison principle for differential inequalities, an approach that feels simpler than the \emph{majorizing series} method.
We also want to argue for dropping the parametrized approach (which considers a map $t\mapsto \op{L}_t)$ to perturbation theory in favor of a direct approach inside $\BX$ (specific parametrized perturbation being then handled by composition), as it seems to clarify the computations.
As a testimony to this point of view, let us give right away a short proof of the qualitative perturbation theory of a simple isolated eigenvalue. The proof does not differ substantially from the one given in \cite{Rosenbloom}, but we include it with our notation as it serves as a starting point to our new results, and also to advertise further the point made by Rosenbloom that this approach should simplify the matter at hand. Note that a similar approach is taken in \cite{HH}.

\begin{theo*}\label{theo:perturbation+}
If $\op{L}_0\in\BX$ has a simple isolated eigenvalue, then there is an open neighborhood $\fspace{V}$ of $\op{L}_0$ such that
all $\op{L}\in\fspace{V}$ have an eigenvalue $\lambda_\op{L}$ close to $\lambda_0$. 
The map $\lambda:\fspace{V}\to\mathbb{K}$ is analytic, $\op{L}$ does not have other eigenvalues near $\lambda_0$, and there is another analytic map $u:\fspace{V}\to \fX$ such that $u_\op{L}$ is an eigenvector of $\op{L}$ for $\lambda_\op{L}$.
\end{theo*}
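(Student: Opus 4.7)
The plan is to realise the eigenproblem as the zero-set of a single analytic map and invoke the analytic Implicit Function Theorem, after fixing a normalisation for the eigenvector. First, using simplicity and isolation of $\lambda_0$, I fix a rank-one spectral projector $\pi_0 \in \BX$ commuting with $\op{L}_0$ and satisfying $\op{L}_0 \pi_0 = \lambda_0 \pi_0$. I pick an eigenvector $u_0$ with $\pi_0 u_0 = u_0$ and introduce the dual form $\ell \in \fX^*$ by $\pi_0 v = \ell(v) u_0$, so that $\ell(u_0) = 1$ and $\ker \ell = \ker \pi_0$. The restriction of $\op{L}_0 - \lambda_0$ to the $\op{L}_0$-invariant complement $\ker \pi_0$ is invertible by isolation; call its bounded inverse $S_0$, the reduced resolvent. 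These are exactly the two natural quantities the introduction singles out.

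Next I consider
\[
F : \BX \times \fX \times \mathbb{K} \longrightarrow \fX \times \mathbb{K}, \qquad F(\op{L}, u, \lambda) = \bigl(\op{L} u - \lambda u,\; \ell(u) - 1\bigr),
\]
which is polynomial (linear in $\op{L}$, quadratic in $(u,\lambda)$), hence analytic, and vanishes at $(\op{L}_0, u_0, \lambda_0)$. Its partial differential at that point with respect to $(u, \lambda)$ sends $(w, \mu)$ to $\bigl((\op{L}_0 - \lambda_0) w - \mu u_0,\; \ell(w)\bigr)$. To invert it on a given target $(z, c)$: the constraint $\ell(w) = c$ forces $\pi_0 w = c u_0$; applying $\pi_0$ to the first component gives $\mu = -\ell(z)$, while applying $1 - \pi_0$ gives $(1 - \pi_0) w = S_0 (1 - \pi_0) z$. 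This is an explicit bounded two-sided inverse, so the differential is a topological isomorphism; the analytic IFT produces a neighbourhood $\fspace{V}$ of $\op{L}_0$ and analytic maps $\op{L} \mapsto u_\op{L}$, $\op{L} \mapsto \lambda_\op{L}$ with $F(\op{L}, u_\op{L}, \lambda_\op{L}) = 0$, i.e.\ an eigenpair of $\op{L}$ normalised by $\ell(u_\op{L}) = 1$. Existence and analyticity are thus settled.

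The delicate point, which I view as the main obstacle, is ruling out \emph{other} eigenvalues of $\op{L}$ close to $\lambda_0$: IFT only controls solutions $(u, \lambda)$ near the base point $(u_0, \lambda_0)$, whereas an a priori stray eigenvector need not sit near $u_0$. I propose to dispose of this by a dichotomy on a hypothetical eigenpair $(u', \lambda')$ with $\lambda'$ close to $\lambda_0$. If $\ell(u') = 0$, then $u' \in \ker \pi_0$ and the equation rewrites as $[(1-\pi_0)\op{L}(1-\pi_0) - \lambda'] u' = 0$ on $\ker \pi_0$; at $\op{L}_0$ this operator coincides with $\op{L}_0|_{\ker \pi_0} - \lambda'$, invertible throughout a fixed neighbourhood of $\lambda_0$ by isolation, hence still invertible for $\op{L}$ close to $\op{L}_0$, forcing $u' = 0$. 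If $\ell(u') \neq 0$, normalise $u'' = u'/\ell(u')$ and set $v = u'' - u_0 \in \ker \pi_0$; projecting the eigenvalue equation onto $\ker \pi_0$ yields a linear equation for $v$ whose operator is a small perturbation of $\op{L}_0 - \lambda_0$ restricted to $\ker \pi_0$ (invert it using $S_0$) and whose right-hand side is of order $\|\op{L} - \op{L}_0\|$. This a priori bound makes $\|v\|$ automatically small, placing $(u'', \lambda')$ inside the IFT uniqueness region and thereby forcing $(u'', \lambda') = (u_\op{L}, \lambda_\op{L})$. Shrinking $\fspace{V}$ if necessary to absorb the constants in this bound completes the argument.
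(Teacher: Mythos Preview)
Your construction of $F$, the verification that its partial differential in $(u,\lambda)$ is a topological isomorphism via the decomposition $\fX=\langle u_0\rangle\oplus\ker\ell$, and the appeal to the analytic Implicit Function Theorem are exactly the paper's proof (your $\ell$ is the paper's $\phi_0$, your $S_0$ its reduced resolvent). So for existence and analyticity you have reproduced the paper's argument.

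Where you go beyond the paper is in the ``no other eigenvalues near $\lambda_0$'' clause. The paper's proof stops at the local uniqueness furnished by the IFT, which, as you rightly observe, only controls solutions $(u,\lambda)$ near $(u_0,\lambda_0)$ and does not by itself rule out an eigenpair $(u',\lambda')$ with $\lambda'$ close to $\lambda_0$ but $u'$ far from $u_0$. Your dichotomy argument fills this gap and is correct: in the case $\ell(u')=0$ the invertibility of $(1-\pi_0)\op{L}(1-\pi_0)-\lambda'$ on $\ker\pi_0$ (a small perturbation of $(\op{L}_0-\lambda_0)|_{\ker\pi_0}$) forces $u'=0$; in the case $\ell(u')\neq 0$ the normalized $u''=u_0+v$ satisfies a linear equation for $v\in\ker\pi_0$ whose operator is again a small perturbation of $(\op{L}_0-\lambda_0)|_{\ker\pi_0}$ and whose right-hand side is $-(1-\pi_0)(\op{L}-\op{L}_0)u_0=O(\lVert\op{L}-\op{L}_0\rVert)$, yielding the a~priori bound that places $(u'',\lambda')$ in the IFT uniqueness region. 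This is a genuine addition to what the paper writes out.
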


\begin{proof}[Proof (Rosenbloom)]
Denote by $u_0$ an eigenvector, let $\phi_0$ be an eigenform (i.e. $\phi_0$ is an eigenvector of $\op{L}_0^*$ for $\lambda_0$) and up to multiplying either of them by a scalar assume $\phi_0(u_0)=1$.
Consider the obviously analytic map
\begin{align*}
F : \BX \times (\fX \times \mathbb{K}) &\to \fX \times \mathbb{K} \\
  (\op{L},u,\lambda) &\mapsto (\op{L}u-\lambda u,\, \phi_0(u)-1)
\end{align*}
We have $F(\op{L}_0,u_0,\lambda_0)=0$ and the partial derivative of $F$ with respect to the $(\fX \times \mathbb{K})$ factor at the point $(\op{L}_0,u_0,\lambda_0)$ is
\[\partial_2 F_{0}(v,\rho) = ((\op{L}_0-\lambda_0)v-\rho u_0, \phi_0(v)).\]
Decomposing along $\langle u_0\rangle\oplus \ker \phi_0$ we see that
\[\partial_2 F_{0}(au_0+k,\rho) = ((\op{L}_0-\lambda_0)k-\rho u_0, a)\]
so that for all $b,\eta\in\mathbb{K}$ and $h\in \ker\phi_0$ the equation $\partial_2 F_{0}(au_0+k,\rho) = (bu_0+h,\eta)$ has a unique solution 
\[ a=\eta, \quad \rho=-b,\quad k= (\op{L}_0-\lambda_0)_{|\ker \phi_0}^{-1}h \]
where the invertibility of $(\op{L}_0-\lambda_0)$ from $\ker\phi_0$ to itself follows from the fact that $\lambda_0$ is simple isolated. The Implicit Functions Theorem then ensures that there is an analytic map $(u,\lambda):\fspace{V} \to \fX\times \mathbb{K}$ defined in a neighborhood of $\op{L}_0$ such that $F(\op{L},u_\op{L},\lambda_\op{L})\equiv 0$ and $(u_\op{L},\lambda_\op{L})$ is the unique solution to this equation in a neighorhood of $(u_0,\lambda_0)$. In particular $\lambda_\op{L}$ is an eigenvalue of $\op{L}$ and $u_\op{L}$ is an eigenvector.
\end{proof}

\paragraph*{Organization of the article.}

In Section \ref{sec:statements} we fix some notation and gather our main statements, to ease later reference.
Section \ref{sec:prer} gives information on a few tools we need: analyticity in Banach spaces, the Implicit Function Theorem, and metric derivatives. In Section \ref{sec:derivatives} we give formulas for the derivatives of the eigendata, from which in Section \ref{sec:estimates} we derive Lipschitz estimates on eigendata and on two crucial parameters $\tau$, $\gamma$. Last,
Section \ref{sec:proofs} contains the end of the proofs of the main results.

\section{Main results}\label{sec:statements}

\paragraph*{Notation and convention.}

All norms will be denoted by $\lVert\cdot\rVert$. Operators, linear and multilinear forms will always be endowed with the operator norm. We denote by $B(y,r)$ the ball of radius $r$ and center $y$.

We shall denote the composition of operators and the application of an operator to a vector by simple juxtaposition (as in $\pi_0\op{M}u_0$) unless it feels necessary to mark them with $\circ$ and parentheses for more clarity (e.g. $\pi_0\circ \op{M}(u_0)$). When $\psi\in\fX^*$ and $v\in\fX$, we will write $\psi(\cdot)v$  for the rank-one operator $\fX\to\fX$ mapping $x$ to $\psi(x)v$ (often denoted by $v\otimes\psi$), not to be confused with the scalar $\psi v$.

In the complex case, we take the convention that $\fX^*$ is made of linear forms (not semi-linear) and we pair forms with vectors without taking conjugate, i.e. $\langle \psi, v\rangle = \psi v$, so that adjoints have the same spectrum as the original operator instead of the conjugate one.

To state more conveniently some of our results, we will use the following two variations on the big-$O$ notation. First, $O_C(\cdot)$ will mean a big-$O$ with explicit bound: if $f$ is a Banach-valued map and $g$ is a function,
\[f = O_C(g) \quad\mbox{if and only if}\quad  \lVert f(x)\rVert \le C \lvert g(x)\rvert \quad \forall x.\]
Second, we write $f = O_{*a,b,\dots}(g)$ when for all 
$a_+,b_+,\dots$, there exist a constant $C=C(a_+,b_+,\dots)>0$ such that for all arguments $x$ with $a(x)\le a_+, b(x)\le b_+,\dots$ we have
\[\lVert f(x)\rVert \le C \lvert g(x) \rvert.\]

All maps with operator-valued arguments have their operators written in subscript indices, e.g. we write $u_\op{L}$, $\lambda_\op{L}$ rather than $u(\op{L})$, $\lambda(\op{L})$; the index $0$ refers to $\op{L}_0$ in this notation, e.g. $u_0=u_{\op{L}_0}$.

Among the possible equivalent definitions of a simple isolated eigenvalue the following one is closest to our needs.
\begin{defi}
We say that $\op{L}\in\BX$ has the scalar $\lambda$ as a \emph{simple isolated eigenvalue} if there exists a non-zero $u\in\fX$ such that $\op{L}u=\lambda u$ and if there exists a complement $G$ to $\langle u\rangle$ which is preserved by $\op{L}$ and such that the restriction and co-restriction of $\op{L}-\lambda$ to $G\to G$ is invertible.
\end{defi}
Note that by a complement we shall always mean a topological (i.e. closed) complement and that we write $\lambda$ for the scalar operator $\lambda\Id$ when no confusion is possible. In the above circumstances, we will denote by $(\op{L}-\lambda)^{-1}$ the inverse of $\op{L}-\lambda$ viewed as an operator on $G$. From now on, we will write all eigendata for $\op{L}$ with a subscript, implicitly assuming $\op{L}$ is in a sufficiently small neighborhood of $\op{L}_0$. In particular, the subspace $G$ will be denoted by $G_\op{L}$ and will be called the \emph{stable complement} of $\op{L}$.

If $\lambda_\op{L}$ is a simple isolated eigenvalue of $\op{L}$, it is also a simple isolated eigenvalue of the dual operator $\op{L}^*$, which has an eigenform $\phi_\op{L}\in\fX^*$ (i.e. $\phi_\op{L}\op{L}(x)=\lambda_\op{L} x$ for all $x\in\fX$). The stable complement $G_\op{L}$ coincides with the kernel of $\phi_\op{L}$, and the $\op{L}^*$-stable complement of $\phi_\op{L}$ is the the set $u_\op{L}^\perp$ of forms that vanish on the eigenvector $u_\op{L}$.

If we normalize the eigenvector or eigenform such that $\phi_\op{L}(u_\op{L})=1$, we can write $\op{P}_\op{L}=\phi_\op{L}(\cdot)u_\op{L}$ and $\pi_\op{L}=\Id-\phi_\op{L}(\cdot)u_\op{L}$ for the projections with respect to the decomposition $\fX=\langle u_\op{L}\rangle \oplus G_\op{L}$.

The \emph{reduced resolvent} (at $\lambda_\op{L}$) of $\op{L}$ is the operator
\[\op{S}_\op{L} = (\op{L}-\lambda_\op{L})^{-1}\pi_\op{L} \in\BX,\]
which takes its values in $G_\op{L}$.

Our method relies on two particular quantities associated to an operator (with a simple isolated eigenvalue), which on the one hand control all derivatives of eigendata (to all order), and on the other hand are defined in terms of some eigendata. 

\begin{defi}\label{defi:quantities}
Let $\op{L}\in\BX$ have a simple isolated eigenvalue $\lambda_\op{L}$, with eigenvector $u_\op{L}$ and eigenform $\phi_\op{L}$. 
We shall consider the quantities
\[\tau_\op{L}:=\frac{\lVert \phi_\op{L}\rVert \lVert u_\op{L}\rVert}{\lvert \phi_\op{L} u_\op{L}\rvert} = \lVert\op{P}_\op{L}\rVert \quad\mbox{and}\quad \gamma_\op{L}:=\lVert(\op{L}-\lambda_\op{L})^{-1}\pi_\op{L}\rVert=\lVert S_\op{L}\rVert,\]
respectively called the \emph{condition number} and the \emph{spectral isolation}.
\end{defi}
Let us quickly explain their relevance. A large condition number means that the eigenspace $\langle u\rangle$ is close to the stable complement $G_\op{L}$; when $\fX$ is a Hilbert space and $\op{L}$ is normal the condition number is $1$, but the condition number is also $1$ in many other cases. The spectral isolation controls how far $\lambda$ must be from the rest of the spectrum (small $\gamma$ entails a very isolated eigenvalue). 

It will be convenient to say that every $\op{L}$ in an open connected set $\fspace{V}\ni\op{L}_0$ ``has an ASIE'' (standing for Analytic Simple Isolated Eigenvalue) if there is an analytic map $\lambda$ defined on $\fspace{V}$ such that $\lambda_\op{L}$ is a simple isolated eigenvalue of $\op{L}$. It will then follow that the other eigendata will also be analytic in the same region.

\subsection{Radius estimate}

We assume $\op{L}_0\in\BX$ has a simple isolated eigenvalue $\lambda_0$ with eigenvector $u_0$, eigenform $\phi_0$, stable complement $G_0:=\ker\phi_0$ and associated projections $P_0$, $\pi_0$.

Our first result is a simple radius estimate.
\begin{theomain}\label{theo:perturbation}
All $\op{L}$ such that $\displaystyle \lVert\op{L}-\op{L}_0\rVert < \frac{1}{6\tau_0\gamma_0}$ have an ASIE.
\end{theomain}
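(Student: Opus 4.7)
My plan is to recast the eigenvalue equation as a fixed-point problem for the eigenvector alone and then apply Banach's fixed-point theorem on an explicit ball.

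Normalize so that $\phi_0(u_0) = 1$ and $\lVert u_0 \rVert = 1$, hence $\lVert \phi_0 \rVert = \tau_0$, and write $\op{M} = \op{L} - \op{L}_0$. Looking for $u$ with $\phi_0(u) = 1$ together with a scalar $\mu = \lambda - \lambda_0$ satisfying $(\op{L}_0 - \lambda_0) u = (\mu - \op{M}) u$, I apply $\phi_0$ to force $\mu = \phi_0(\op{M} u)$, then apply the reduced resolvent $S_0$; using the identities $S_0 (\op{L}_0 - \lambda_0) = \pi_0$, $\phi_0 \circ S_0 = 0$, and $S_0 u_0 = 0$ reduces the problem to the fixed-point equation
\[
u = \Phi(u) := u_0 + \phi_0(\op{M} u)\, S_0 u - S_0 \op{M} u.
\]
Conversely, any fixed point $u$ of $\Phi$ automatically satisfies $\phi_0(u) = 1$ (apply $\phi_0$ to both sides) and is an eigenvector of $\op{L}$ for the eigenvalue $\lambda_0 + \phi_0(\op{M} u)$, so solving the perturbed eigenvalue problem is equivalent to finding a fixed point of $\Phi$ near $u_0$.

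Next I apply Banach's theorem to $\Phi$ on the closed unit ball centered at $u_0$. Two ingredients are needed. First, $\Phi(u_0) - u_0 = -S_0 \op{M} u_0$, giving $\lVert \Phi(u_0) - u_0 \rVert \le \gamma_0 \lVert \op{M} \rVert$. Second, differentiating,
\[
\Phi'(u)[h] = \phi_0(\op{M} h)\, S_0 u + \phi_0(\op{M} u)\, S_0 h - S_0 \op{M} h,
\]
and subtracting $\Phi'(u_0)[h] = \phi_0(\op{M} u_0)\, S_0 h - S_0 \op{M} h$ yields the derivative bound
\[
\lVert \Phi'(u) \rVert \le \gamma_0 \lVert \op{M} \rVert \bigl( 1 + \tau_0 + 2\tau_0 \lVert u - u_0 \rVert \bigr),
\]
which is at most $\gamma_0 \lVert \op{M} \rVert (1 + 3\tau_0)$ on the closed unit ball. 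Since $P_0 = \phi_0(\cdot) u_0$ is a nonzero projection we have $\tau_0 \ge 1$, and under the hypothesis $\lVert \op{M} \rVert < 1/(6 \tau_0 \gamma_0)$ both the contraction bound $\gamma_0 \lVert \op{M} \rVert (1 + 3\tau_0) < 1$ and the self-map bound $\gamma_0 \lVert \op{M} \rVert (2 + 3\tau_0) \le 1$ are satisfied, producing a unique fixed point $u_\op{L}$.

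Analyticity of $\op{L} \mapsto u_\op{L}$ and of $\op{L} \mapsto \lambda_\op{L} := \lambda_0 + \phi_0(\op{M} u_\op{L})$ then follows from the analytic-parameter form of Banach's theorem recalled in Section~\ref{sec:prer}, since $\Phi$ depends analytically (indeed affinely) on $\op{L}$. To promote ``simple eigenvalue'' to ``simple isolated eigenvalue'', I run the same construction on the adjoint $\op{L}^*$ to obtain an eigenform $\phi_\op{L}$, note that $\phi_\op{L}(u_\op{L}) \ne 0$ by continuity, set $G_\op{L} := \ker \phi_\op{L}$, and observe that invertibility of $(\op{L} - \lambda_\op{L})$ on $G_\op{L}$ is an open condition inherited from $\op{L}_0$. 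The step I expect to be the trickiest is extracting the clean constant $6$: the contraction and self-map conditions pull against each other, and recovering $1/6$ rather than some larger denominator requires the specific choice of unit ball and the systematic use of $\tau_0 \ge 1$.
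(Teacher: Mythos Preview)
Your contraction-mapping route is genuinely different from the paper's and works cleanly for producing $u_\op{L}$ and $\lambda_\op{L}$ on the full ball: the paper instead establishes the eigendata only locally via the Implicit Function Theorem, then proves the differential inequality $\llip(\tau\gamma)\le 6(\tau\gamma)^2$ and runs a comparison-and-continuation argument along rays to push the domain out to radius $1/(6\tau_0\gamma_0)$, bounding $\gamma_\op{L}=\lVert\op{S}_\op{L}\rVert$ along the way. Your direct estimates recover the same constant $6$ without any ODE, which is attractive.

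The genuine gap is the passage to ``simple isolated''. Saying that invertibility of $(\op{L}-\lambda_\op{L})$ on $G_\op{L}$ is ``an open condition inherited from $\op{L}_0$'' gives the property only on \emph{some} neighborhood of $\op{L}_0$; to obtain it on the entire ball you need a uniform bound on the reduced resolvent, which is exactly the quantity $\gamma_\op{L}$ that the paper's method controls and yours, as written, does not. The same objection applies to ``$\phi_\op{L}(u_\op{L})\ne 0$ by continuity''. Both gaps are fillable within your framework, but they require honest work rather than a one-line remark: from the contraction estimates one extracts $\lVert u_\op{L}-u_0\rVert<1/(2\tau_0)$ and (via the dual problem) $\lVert\phi_\op{L}-\phi_0\rVert<1/2$, hence an explicit bound on $\lvert\mu_\op{L}\rvert$; then $\op{S}_0(\op{L}-\lambda_\op{L})+\op{P}_0=\Id+\op{S}_0(\op{M}-\mu_\op{L})$ has perturbation of norm $<1$ and is invertible by a Neumann series, from which both injectivity and a bounded inverse for $(\op{L}-\lambda_\op{L})|_{G_\op{L}}$ follow. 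Until such a quantitative step is supplied, your argument shows that $\lambda_\op{L}$ is an eigenvalue with analytic dependence, but not that it is simple isolated throughout the ball. (A minor point: Section~\ref{sec:prer} recalls the Implicit Function Theorem, not an analytic-parameter form of Banach's theorem; invoke the former, or state the latter separately.)
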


\begin{rema}
This is very close to the estimate of Baumg\"artel \cite{Baumgartel}, see page 322 and further. However Baumg\"artel assumes $\fX$ is a Hilbert space; it might be possible to extend the method he employs to general Banach spaces, but the level of technicality makes it tedious to check. 

It is not easy to compare with the result of \cite{Nair} in general, notably because our choice of balance between precision and simplicity is slightly different. When $\tau_0=1$, $\lVert(\op{L}_0-\lambda_0)^{-1}\rVert=:1/\delta_0$ and $\lVert\pi_0\rVert=2$ (which is not uncommon, see Remark \ref{rema:Perron}), in the worst case Nair gets a radius of $\delta_0/16$ while we get $\delta_0/12$.
%
\end{rema}

\begin{rema}\label{rema:Perron}
A toy application consists in applying Theorem \ref{theo:estimate1} in 
$\fX=\mathbb{R}^n$ with the supremum norm $\lVert\cdot\rVert_\infty$, to (a multiple of) the matrix $\op{L}_0$ having all coefficients equal to $1/n$, 
yielding the following. 

A matrix $\op{L}=(\ell_{ij})_{ij}$ that has almost constant coefficients in the sense that for some $c$, on all rows $i$ it holds 
\begin{equation}
\frac{1}{n}\sum_k \lvert \ell_{ik}-c\rvert\le \frac{\lvert c\rvert}{12},
\label{eq:Perron}
\end{equation}
must have a simple eigenvalue (here the $12$ comes from $\tau_0=1$ and $\gamma_0\le 2$). Under a slightly stronger bound, Theorem \ref{theo:estimate1} will also imply that the eigenvalue is positive, and we could further find conditions ensuring the eigenvector is positive too. 

This can be seen as a variation on the Perron-Frobenius Theorem since \eqref{eq:Perron} is fulfilled whenever for all coefficients $\lvert \ell_{ij}-c\rvert< \lvert c\rvert/12$ (the Perron-Frobenius Theorem would ask this with $1/12$ replaced by $1$, taking $c$ as the middle of the range interval of the coefficients); but \eqref{eq:Perron} is more flexible in that it allows for coefficients of variable sign (a small proportion of the coefficients can be very far from $c$).
\end{rema}

\subsection{Regularity estimates}

Next, at any distance smaller than our radius estimate we obtain effective regularity estimates. This is the main result of this article, to be used intensively in \cite{K:concentration}.
\begin{theomain}\label{theo:estimate1}
Given any $K>1$, whenever $\displaystyle \lVert\op{L}-\op{L}_0\rVert \le \frac{K-1}{6K\tau_0\gamma_0}$  we have
\begin{align*}
\lVert D\lambda\rVert &\le\tau_0+\frac{K-1}{3} & \lVert D\op{P}_\op{L} \rVert &\le 2K\tau_0\gamma_0 \\
\lVert D^2\lambda\rVert &\le 2K\tau_0\gamma_0  &\lVert D\pi_\op{L} \rVert &\le 2K\tau_0\gamma_0\\
\lVert D^3\lambda \rVert &\le 12 K^2 \tau_0^2\gamma_0^2, & &
\end{align*}
and the following Taylor formulas with explicit bounds:
\begin{align*}
\lambda_\op{L} &= \lambda_0 +O_{\tau_0+\frac{K-1}{3}} \big(\lVert\op{L}-\op{L}_0\rVert\big) \\
\lambda_\op{L} &= \lambda_0 + \phi_0(\op{L}-\op{L}_0)u_0 + O_{K \tau_0\gamma_0}\big(\lVert\op{L}-\op{L}_0\rVert^2\big) \\
\lambda_\op{L} &= \lambda_0 + \phi_0(\op{L}-\op{L}_0)u_0 - \phi_0(\op{L}-\op{L}_0)\op{S}_0(\op{L}-\op{L}_0)u_0 + O_{2 K^2\tau_0^2\gamma_0^2}\Big(\lVert \op{L}-\op{L}_0\rVert^3 \Big)\\
\op{P}_\op{L} &= \op{P}_0+O_{2K\tau_0\gamma_0}(\lVert\op{L}-\op{L}_0\rVert) \\
\pi_\op{L} &= \pi_0+O_{\tau_0+\frac{K-1}{3}}(\lVert\op{L}-\op{L}_0\rVert).
\end{align*}
\end{theomain}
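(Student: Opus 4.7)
My plan is to combine the derivative formulas for eigendata (Section~\ref{sec:derivatives}) with a comparison-principle argument on the two quantities $\tau_\op{L}$ and $\gamma_\op{L}$ (Section~\ref{sec:estimates}), then integrate along the segment $[\op{L}_0,\op{L}]$ to turn pointwise derivative bounds into Taylor formulas.

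First, at any $\op{L}$ admitting an ASIE, I would write down explicit Fr\'echet derivatives of the eigendata. Differentiating $\op{L}u_\op{L}=\lambda_\op{L}u_\op{L}$ under the normalization $\phi_\op{L}(u_\op{L})=1$ and applying $\phi_\op{L}$ gives $D\lambda_\op{L}(\op{M})=\phi_\op{L}(\op{M}u_\op{L})$, hence $\lVert D\lambda_\op{L}\rVert\le \tau_\op{L}$. Using the splitting $\fX=\langle u_\op{L}\rangle\oplus \ker\phi_\op{L}$ together with the inverse $\op{S}_\op{L}$ of $\op{L}-\lambda_\op{L}$ on $\ker\phi_\op{L}$, one similarly expresses $Du_\op{L}$, $D\op{P}_\op{L}$ and $D\pi_\op{L}$ and bounds each by a constant multiple of $\tau_\op{L}\gamma_\op{L}$. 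Further differentiation yields $\tfrac{1}{2}D^2\lambda_\op{L}(\op{M},\op{M})=\phi_\op{L}(\op{M}\op{S}_\op{L}\op{M}u_\op{L})$, of norm at most $\tau_\op{L}\gamma_\op{L}$, and a $D^3\lambda_\op{L}$ bounded by a constant multiple of $\tau_\op{L}^2\gamma_\op{L}^2$.

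Next, I would propagate the control to $\tau$ and $\gamma$ themselves over the ball. These quantities are operator norms of continuously varying operators, hence admit well-controlled metric derivatives (Section~\ref{sec:prer}), and the formulas above translate into coupled differential inequalities on $\llip\tau_\op{L}$ and $\llip\gamma_\op{L}$. Along a straight segment $\op{L}_t=\op{L}_0+t\op{M}$ these reduce to an autonomous scalar inequality of the form $y'\le 6\,y^2\lVert\op{M}\rVert$ for $y(t):=\tau_{\op{L}_t}\gamma_{\op{L}_t}$; comparing with the explicit solution $y_0/(1-6y_0 t\lVert\op{M}\rVert)$ gives $\tau_\op{L}\gamma_\op{L}\le K\tau_0\gamma_0$ on the whole ball of radius $\tfrac{K-1}{6K\tau_0\gamma_0}$. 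Integrating $\llip\tau_\op{L}\le 2\tau_\op{L}\gamma_\op{L}\le 2K\tau_0\gamma_0$ over this ball then produces the additive bound $\tau_\op{L}\le \tau_0+\tfrac{K-1}{3}$.

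With these uniform bounds plugged back into the formulas of step one, the five pointwise derivative bounds follow at once. The Taylor formulas are then obtained by integrating along $[\op{L}_0,\op{L}]$: the first-order bound on $\lambda_\op{L}-\lambda_0$ is the integral of $\lVert D\lambda\rVert$; the second-order formula uses $D\lambda_0(\op{L}-\op{L}_0)=\phi_0((\op{L}-\op{L}_0)u_0)$ together with the standard integral remainder controlled by $\lVert D^2\lambda\rVert$; and the third-order formula matches the quadratic term $\phi_0((\op{L}-\op{L}_0)\op{S}_0(\op{L}-\op{L}_0)u_0)$ against the integral remainder for $D^3\lambda$; the formulas for $\op{P}_\op{L}$ and $\pi_\op{L}$ follow in the same fashion. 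The main obstacle I expect is in the middle step: extracting coupled differential inequalities on $\tau$ and $\gamma$ whose constants integrate exactly to the window $\tfrac{K-1}{6K\tau_0\gamma_0}$ of Theorem~\ref{theo:perturbation} while producing the clean outputs $K\tau_0\gamma_0$ on $\tau\gamma$ and $\tau_0+\tfrac{K-1}{3}$ on $\tau$, and doing so rigorously despite $\tau$ and $\gamma$ being only Lipschitz, not Fr\'echet differentiable, functions of $\op{L}$.
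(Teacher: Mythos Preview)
Your proposal is correct and follows essentially the same route as the paper: compute the derivatives of the eigendata (Proposition~\ref{prop:derivatives}), bound them pointwise by $\tau_\op{L}$ and $\gamma_\op{L}$ (Proposition~\ref{prop:local-bounds}), combine the metric-derivative inequalities $\llip\tau\le 2\tau\gamma$ and $\llip\gamma\le\gamma^2(1+3\tau)$ into $\llip(\tau\gamma)\le 6(\tau\gamma)^2$ (Corollaries~\ref{coro:core} and~\ref{coro:core2}), compare with the explicit solution of $w'=6w^2$ to get $\tau_\op{L}\gamma_\op{L}\le K\tau_0\gamma_0$ on the stated ball, and then integrate $\llip\tau\le 2K\tau_0\gamma_0$ to obtain $\tau_\op{L}\le\tau_0+\tfrac{K-1}{3}$. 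The only point you leave implicit is that the ASIE is already guaranteed on this ball by Theorem~\ref{theo:perturbation}, which the paper in fact proves jointly with the present estimates via the same comparison argument and an open--closed continuation; since you may invoke Theorem~\ref{theo:perturbation} here, your outline is complete.
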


\begin{rema}\label{rema:BE}
We stopped our estimates at order $3$ while it is easy (but slightly tedious) to use our methods up to any finite order, notably Proposition \ref{prop:derivatives} is easily extended. Our motivation to go precisely this far is in Berry-Esseen bounds: in \cite{K:concentration} we apply these estimates to Markov chains, seen as averaging operators on a suitable space of functions. Under a natural spectral gap assumption, the order $1$ term gives a law of large number and the order $1$ Taylor formula gives effective estimates in the convergence speed; the order $2$ Taylor development gives a Central Limit Theorem and the order $2$ Taylor formula gives effective estimate in the convergence speed, notably Berry-Esseen bounds.
\end{rema}

\begin{rema}
Expressed in terms of $r=\lVert\op{L}-\op{L}_0\rVert$, these bounds are
\begin{align*}
\lVert D\lambda\rVert &\le\tau_0+\frac{2\tau_0\gamma_0 r}{1-6\tau_0\gamma_0r} \\
\lVert D^2\lambda\rVert,\ \lVert D\op{P}_\op{L}\rVert,\ \lVert D\pi_\op{L} \rVert &\le \frac{2\tau_0\gamma_0}{1-6\tau_0\gamma_0 r}\\
\lVert D^3\lambda \rVert &\le \frac{12 \tau_0^2\gamma_0^2}{(1-6\tau_0\gamma_0 r)^2}.
\end{align*}
\end{rema}



\subsection{Spectral gap estimates}

In some applications, we have more than an isolated eigenvalue: a spectral gap below $\lambda_0$. It is well-known that the operators having a spectral gap form an open set, and we shall provide a quantitative version of this statement.
\begin{defi}\label{defi:gap}
We shall say that $\op{L}\in\BX$ has a \emph{spectral gap} (\emph{of size $\delta\in(0,1)$} \emph{with constant $C\ge 1$}) below its eigenvalue $\lambda$ if on the stable complement $G$ to the one-dimensional eigenspace it holds
\[\lVert \op{L}^n x \rVert \le C \lvert\lambda\rvert^n(1-\delta)^n \lVert x\rVert \quad\forall x\in G,  \forall n\in\mathbb{N}.\]
\end{defi}
Under the assumption of a spectral gap, $\lambda$ is not only isolated from the rest of the spectrum: the rest of the spectrum is contained in a disc of radius $\lvert\lambda\rvert(1-\delta)$. We shall then call $\lambda$ the leading (or main) eigenvalue.

When it comes to perturbations, the simplest case to handle is when $C=1$, i.e. $\frac{1}{\lambda_0}\op{L}_0$ is contracting on $G_0$.
\begin{theomain}\label{theo:spectral-gap}
Assume $\op{L}_0$ has a spectral gap of size $\delta_0$ below its leading eigenvalue $\lambda_0$ with constant $C_0=1$, i.e.
\[\lVert \op{L}_0 x\rVert\le (1-\delta_0)\lvert\lambda_0\rvert \lVert x\rVert \quad \forall x\in G_0.\]
Set $a=2\big(\lvert\lambda_0\rvert(1-\delta_0)+\lVert\op{L}_0\rVert\big)$.
Given $\delta\in(0,\delta_0)$, let $\rho(\delta)$ be the unique positive root of
\[X^2\Big(a+\frac{1-\delta}{6\tau_0\gamma_0} \Big) +X\Big(6\lvert\lambda_0\rvert(\delta-\delta_0) + a+\frac{1-\delta}{\gamma_0}+\frac{1}{\tau_0\gamma_0} \Big)+6\lvert\lambda_0\rvert (\delta-\delta_0).\]
Then every $\op{L}\in\BX$ such that
\[\lVert\op{L}-\op{L}_0\rVert \le \frac{\rho(\delta)}{6(1+\rho(\delta))\tau_0\gamma_0}\]
has a spectral gap of size $\delta$ below $\lambda_\op{L}$, with constant $1$.
\end{theomain}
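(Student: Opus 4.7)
My plan is to reduce the spectral-gap conclusion to a one-step operator-norm inequality on the invariant subspace $G_\op{L}$, and then match the resulting scalar condition to the polynomial in the statement. First, I observe that the hypothesis $r := \lVert\op{L}-\op{L}_0\rVert \le \rho/(6(1+\rho)\tau_0\gamma_0)$ takes the form of Theorem \ref{theo:estimate1}'s radius condition $(K-1)/(6K\tau_0\gamma_0)$ with $K = 1+\rho$. Invoking Theorems \ref{theo:perturbation} and \ref{theo:estimate1} for this $K$ provides the ASIE for $\op{L}$ together with the estimates $\lVert \op{P}_\op{L}-\op{P}_0\rVert \le 2(1+\rho)\tau_0\gamma_0\, r$ and, via the second-order Taylor formula combined with $|\phi_0(\op{L}-\op{L}_0)u_0| \le \tau_0 r$, the lower bound $|\lambda_\op{L}|\ge |\lambda_0| - \tau_0 r - (1+\rho)\tau_0\gamma_0 r^2$.

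Since $G_\op{L}$ is $\op{L}$-invariant, the required spectral gap is equivalent to the one-step estimate $\lVert\op{L}\, x\rVert \le (1-\delta)|\lambda_\op{L}|\,\lVert x\rVert$ for all $x\in G_\op{L}$. For $x\in G_\op{L}$ with $\lVert x\rVert=1$, the identity $\op{P}_\op{L} x = 0$ gives $\op{P}_0 x = (\op{P}_0-\op{P}_\op{L})x$, so $\lVert \op{P}_0 x\rVert \le q := \lVert \op{P}_\op{L}-\op{P}_0\rVert$ and $\lVert \pi_0 x\rVert \le 1+q$. Splitting $\op{L}\, x = \op{L}_0\pi_0 x + \op{L}_0\op{P}_0 x + (\op{L}-\op{L}_0)x$, bounding the first term by $(1-\delta_0)|\lambda_0|\lVert\pi_0 x\rVert$ using the spectral gap of $\op{L}_0$ on $G_0$ and the second by the deliberately loose $\lVert\op{L}_0\rVert\,\lVert\op{P}_0 x\rVert$, I obtain
\[\lVert\op{L}\, x\rVert \;\le\; (1-\delta_0)|\lambda_0| + \tfrac{a}{2}\, q + r,\]
in which the constant $a = 2\bigl(|\lambda_0|(1-\delta_0) + \lVert\op{L}_0\rVert\bigr)$ of the statement arises as the natural pairing of the two operator-norm contributions.

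Imposing $\lVert\op{L}\, x\rVert \le (1-\delta)|\lambda_\op{L}|$ and substituting the estimates on $q$ and $|\lambda_\op{L}|$ from the first step produces a scalar inequality in $r$ and $\rho$. At the threshold $r = \rho/(6(1+\rho)\tau_0\gamma_0)$ one has $(1+\rho)\tau_0\gamma_0\, r = \rho/6$ and $(1+\rho)\tau_0\gamma_0\, r^2 = \rho r/6$; multiplying the inequality through by $6(1+\rho)\tau_0\gamma_0$ and then dividing by $\tau_0\gamma_0$ rearranges it into precisely the polynomial of the statement being $\le 0$ at $X = \rho$. Since that polynomial has positive leading coefficient and negative constant term, this is equivalent to $\rho \le \rho(\delta)$, and the monotonicity of $\rho\mapsto \rho/(6(1+\rho)\tau_0\gamma_0)$ transfers this back into the prescribed bound on $r$. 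The main obstacle will be the algebraic rearrangement: matching the $(1-\delta)/(6\tau_0\gamma_0)$ coefficient of $X^2$ requires the second-order rather than first-order Taylor bound on $\lambda_\op{L}$, and matching $a$ in the linear coefficient requires the loose $\lVert\op{L}_0\op{P}_0 x\rVert \le \lVert\op{L}_0\rVert\lVert\op{P}_0 x\rVert$ in place of the sharper $|\lambda_0|\lVert\op{P}_0 x\rVert$.
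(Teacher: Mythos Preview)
Your proposal is correct and follows essentially the same route as the paper's proof. The paper also splits $\op{L}x$ via the projections (writing $\op{L}_0\pi_0 x + \op{L}_0(\pi_\op{L}-\pi_0)x + (\op{L}-\op{L}_0)x$, which coincides with your $\op{L}_0\pi_0 x + \op{L}_0\op{P}_0 x + (\op{L}-\op{L}_0)x$ since $\op{P}_0-\op{P}_\op{L}=\pi_\op{L}-\pi_0$), invokes Theorem~\ref{theo:estimate1} with $K=1+\rho$ to bound $\lVert\pi_\op{L}-\pi_0\rVert\le\rho/3$ and $\lvert\lambda_\op{L}\rvert$ via the order-two Taylor formula, and then multiplies through by $6K$ to land on the stated quadratic in $K-1$; your multiplication by $6(1+\rho)\tau_0\gamma_0$ followed by division by $\tau_0\gamma_0$ is the same operation.
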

Note that $\rho(\delta)$ tends to $0$ as $\delta\to\delta_0$ and has a finite limit when $\delta\to 0$, which gives a lower bound on the radius around $\op{L}_0$ where some spectral gap persists. The expressions are a bit intricate, but they only depend on the numerical quantities $\tau_0$, $\gamma_0$, $\lVert L_0\rVert$, $\lvert\lambda_0\rvert$, $\delta_0$, neither on the specific value of $\op{L}_0$ nor on any property of $\fX$.

Under quite common further assumptions, we can simplify the result if we accept to loose some precision.
\begin{coromain}\label{coro:gap-short}
In the case $\lambda_0=\lVert \op{L}_0\rVert=C_0=1$, all $\op{L}$ such that
\[\lVert\op{L}-\op{L}_0\rVert \le \frac{\delta_0(\delta_0-\delta)}{6(1+\delta_0-\delta)\tau_0\lVert\pi_0\rVert}\]
have a spectral gap of size $\delta$ below $\lambda_\op{L}$, with constant $1$. In particular, all $\op{L}$ such that
\[\lVert\op{L}-\op{L}_0\rVert < \frac{\delta_0^2}{6(1+\delta_0)\tau_0\lVert\pi_0\rVert}\]
have some spectral gap, with constant $1$.
\end{coromain}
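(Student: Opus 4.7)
The plan is to derive the corollary from Theorem~\ref{theo:spectral-gap} by specializing its parameters to the assumed values and by bounding $\gamma_0$ in terms of $\lVert\pi_0\rVert$. First I will simplify the constants appearing in the theorem. Under $\lambda_0=\lVert\op{L}_0\rVert=1$, the quantity $a$ becomes $a=2[(1-\delta_0)+1]=4-2\delta_0$, so in particular $a-6=-2(1+\delta_0)$. The hypothesis $C_0=1$ makes $\op{L}_0$ strictly contracting on $G_0$ with ratio $1-\delta_0$, so the Neumann series $-\sum_{n\ge 0}(\op{L}_0|_{G_0})^n$ converges to $(\op{L}_0-1)^{-1}|_{G_0}$ with norm $\le 1/\delta_0$; composing with $\pi_0$ yields the fundamental bound $\gamma_0\le\lVert\pi_0\rVert/\delta_0$.

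Next, writing $\varepsilon=\delta_0-\delta$, I set
\[
X_*=\frac{\delta_0\varepsilon\gamma_0}{(1+\varepsilon)\lVert\pi_0\rVert-\delta_0\varepsilon\gamma_0},\quad\text{so that}\quad\frac{X_*}{1+X_*}=\frac{\delta_0\varepsilon\gamma_0}{(1+\varepsilon)\lVert\pi_0\rVert},
\]
the denominator being positive thanks to $\delta_0\gamma_0\le\lVert\pi_0\rVert$. Comparing the corollary's radius to the one given by Theorem~\ref{theo:spectral-gap} is equivalent to $X_*\le\rho(\delta)$; since the quadratic $P$ from the theorem is convex with negative constant term $-6\varepsilon$ and unique positive root $\rho(\delta)$, this will follow from $P(X_*)\le 0$. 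I would expand $P(X_*)$ using the simplified $a=4-2\delta_0$, and verify non-positivity using the bound $\gamma_0\le\lVert\pi_0\rVert/\delta_0$ together with the generic inequalities $\tau_0\ge 1$, $\lVert\pi_0\rVert\ge 1$, $1-\delta\le 1$.

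The second inequality in the corollary then follows by a limiting argument: $\delta_0^2/[6(1+\delta_0)\tau_0\lVert\pi_0\rVert]$ is the $\delta\to 0^+$ limit of the first bound, so any $\op{L}$ strictly closer to $\op{L}_0$ than this limit already satisfies the first bound's non-strict hypothesis for some small $\delta>0$ and therefore has a spectral gap of that positive size.

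The main obstacle is the algebraic verification of $P(X_*)\le 0$: the inequality couples the parameters $\tau_0,\gamma_0,\delta_0,\delta,\lVert\pi_0\rVert$ in a nontrivial way, and while each individual ingredient is elementary, one must exploit the precise cancellation encoded in $a-6=-2(1+\delta_0)$ matched against the hypothesis $\delta_0\gamma_0\le\lVert\pi_0\rVert$, rather than applying the uniform bounds term by term. A cleaner alternative I would try first is to observe that the theorem's radius is monotone decreasing in $\gamma$ (the coefficients of $P$ being affine in $1/\gamma$, $\rho$ grows only sublinearly in $\gamma$ while the prefactor $1/\gamma$ decreases), so that the comparison reduces to the extremal value $\gamma=\lVert\pi_0\rVert/\delta_0$, where $X_*$ becomes simply $\varepsilon$ and only the concrete inequality $\rho(\delta)\ge\delta_0-\delta$ remains to check.
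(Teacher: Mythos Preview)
Your overall plan coincides with the paper's: bound $\gamma_0\le\lVert\pi_0\rVert/\delta_0$ via the Neumann series and then verify, at the extremal value $\gamma'_0=\lVert\pi_0\rVert/\delta_0$, that the root satisfies $\rho(\delta)\ge\varepsilon:=\delta_0-\delta$. Two remarks on the execution.

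The monotonicity-in-$\gamma$ argument you sketch (``$\rho$ grows only sublinearly in $\gamma$'') is both heuristic and unnecessary. The paper's reduction is simpler and more robust: in the proof of Theorem~\ref{theo:spectral-gap}, $\gamma_0$ appears only as an upper bound for $\lVert\op{S}_0\rVert$, so the theorem remains valid verbatim with $\gamma_0$ replaced by any larger number in both hypothesis and conclusion. You may therefore invoke the theorem directly with $\gamma'_0$ in place of $\gamma_0$, without analyzing how the radius varies with~$\gamma$.

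Second, you stop short of actually verifying $\rho(\delta)\ge\varepsilon$ at $\gamma'_0$. With $a=4-2\delta_0$ and $1/\gamma'_0=\delta_0/\lVert\pi_0\rVert\le\delta_0$, the direct evaluation
\[
\frac{P(\varepsilon)}{\varepsilon}=(a-6)(1+\varepsilon)+\frac{1}{\gamma'_0}\Big(\frac{(1-\delta)\varepsilon}{6\tau_0}+(1-\delta)+\frac{1}{\tau_0}\Big)\le -2(1+\delta_0)+\tfrac{13}{6}\,\delta_0<0
\]
settles it, using precisely the cancellation $a-6=-2(1+\delta_0)$ you flagged. This direct check is in fact preferable to the paper's own write-up, which first bounds the linear coefficient by $6$ (discarding its $-6\varepsilon$ piece) before computing the root: that simplification overshoots, and the root of the resulting polynomial $\tfrac{25}{6}X^2+6X-6\varepsilon$ is \emph{below} $\varepsilon$, so the displayed inequality $\rho'(\delta)\le\delta_0-\delta$ points the wrong way for the conclusion. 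Your proposed evaluation of $P(\varepsilon)$ is the clean fix.
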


\begin{rema}
While the assumptions may seem quite restrictive, they are relevant to the case when $\op{L}$ belong to a family of ``transfer operators'' associated with various potentials for a fixed dynamical system; we apply Corollary \ref{coro:gap-short} to this context in \cite{K:HT}.
\end{rema}

The case when $C_0>1$ is technically more involved. Instead of working out the numbers, we simply state a uniform but non-effective result.
\begin{coro}\label{coro:gap-ne}
If $\op{L}_0$ has a spectral gap of size $\delta_0$ with constant $C_0$ below its eigenvalue $\lambda_0$, then all $\op{L}$ such that
\[\lVert\op{L}-\op{L}_0\rVert \le O_{*C_0,\delta_0^{-1},\tau_0,\lvert\lambda_0\rvert}(1)\]
have a spectral gap below $\lambda_\op{L}$.
\end{coro}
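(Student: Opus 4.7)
The strategy is to reduce to the constant-$1$ case of Theorem~\ref{theo:spectral-gap} by passing to a sufficiently high iterate $\op{L}_0^n$ and then lifting the result back to $\op{L}$. First I would choose $n=n(C_0,\delta_0)$ large enough that $C_0(1-\delta_0)^n<1$. Then on the stable complement $G_0$ one has
\[\lVert\op{L}_0^n x\rVert\le C_0\lvert\lambda_0\rvert^n(1-\delta_0)^n\lVert x\rVert = \lvert\lambda_0^n\rvert(1-\delta^{(n)})\lVert x\rVert\]
with $\delta^{(n)}=1-C_0(1-\delta_0)^n\in(0,1)$, so $\op{L}_0^n$ has a spectral gap of size $\delta^{(n)}$ and constant $1$ below its simple isolated eigenvalue $\lambda_0^n$ in the sense of Definition~\ref{defi:gap}.

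Next I would check that every parameter of $\op{L}_0^n$ needed to apply Theorem~\ref{theo:spectral-gap} is controlled by $C_0,\delta_0^{-1},\tau_0,\lvert\lambda_0\rvert$. Since $u_0$ and $\phi_0$ are common eigenvector and eigenform of $\op{L}_0$ and $\op{L}_0^n$, we have $\tau_{\op{L}_0^n}=\tau_0$ and $\lVert\pi_0\rVert\le 1+\tau_0$. Expanding on $G_0$
\[(\op{L}_0^n-\lambda_0^n)^{-1}\pi_0 = -\lambda_0^{-n}\sum_{k\ge 0}(\op{L}_0/\lambda_0)^{nk}\pi_0\]
yields a geometric bound on $\gamma_{\op{L}_0^n}$ depending only on $C_0,\delta_0,\lvert\lambda_0\rvert,\lVert\pi_0\rVert$, while $\lVert\op{L}_0^n\rVert\le\lVert\op{L}_0\rVert^n$ is bounded via the decomposition $\op{L}_0=\lambda_0\op{P}_0+\op{L}_0\pi_0$ and the spectral-gap hypothesis. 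When $\lVert\op{L}-\op{L}_0\rVert\le\varepsilon\le 1$, a telescoping estimate gives $\lVert\op{L}^n-\op{L}_0^n\rVert\le n(\lVert\op{L}_0\rVert+1)^{n-1}\varepsilon$, so shrinking $\varepsilon$ below a threshold depending only on the allowed parameters places $\op{L}^n$ inside the region covered by Theorem~\ref{theo:spectral-gap}. By uniqueness of the ASIE near $\lambda_0^n$ this produces a spectral gap of some size $\delta'>0$ and constant $1$ below the leading eigenvalue $\lambda_\op{L}^n$ of $\op{L}^n$.

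Finally I would transfer the gap from $\op{L}^n$ back to $\op{L}$. Since $\phi_\op{L}$ is simultaneously an eigenform of $\op{L}^n$, the stable complements coincide, $G_\op{L}=G_{\op{L}^n}$, and this subspace is preserved by $\op{L}$. For any $m=nq+r$ with $0\le r<n$ and $x\in G_\op{L}$, the bound
\[\lVert\op{L}^m x\rVert\le\lVert\op{L}\rVert^r(1-\delta')^q\lvert\lambda_\op{L}\rvert^{nq}\lVert x\rVert\]
can be rewritten as $C'(1-\tilde\delta)^m\lvert\lambda_\op{L}\rvert^m\lVert x\rVert$ with $1-\tilde\delta=(1-\delta')^{1/n}$ and $C'$ absorbing the bounded factors $\lVert\op{L}\rVert^r\lvert\lambda_\op{L}\rvert^{-r}$ and $(1-\delta')^{-1}$. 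I expect the main obstacles to be in the second step: making the dependence of $\gamma_{\op{L}_0^n}$ on the allowed parameters fully explicit, and ensuring that $\lvert\lambda_\op{L}\rvert$ stays bounded away from zero throughout the perturbation range, which can be arranged by shrinking $\varepsilon$ further if necessary using the regularity bounds of Theorem~\ref{theo:estimate1}.
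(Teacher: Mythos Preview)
Your proposal is correct and follows essentially the same route as the paper: pass to a power $\op{L}_0^n$ with $n=n(C_0,\delta_0)$ so that the spectral-gap constant becomes $1$, apply the constant-$1$ result there, then pull the gap back to $\op{L}$ via $\lVert\op{L}^m x\rVert\le\lVert\op{L}\rVert^r\lVert\op{L}^n\rVert^q\lVert x\rVert$ for $m=nq+r$. The only cosmetic differences are that the paper invokes Corollary~\ref{coro:gap-short} (after an implicit normalization) rather than Theorem~\ref{theo:spectral-gap}, and bounds $\gamma_{\op{L}_0^{n}}$ via the factorization $\lambda_0^{n}-\op{L}_0^{n}=(\lambda_0-\op{L}_0)\sum_k\lambda_0^k\op{L}_0^{n-1-k}$ rather than your direct geometric series; your version is if anything slightly cleaner on both points.
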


\begin{rema}
Continuity of eigenvalues has been proved by Keller and Liverani \cite{Keller-Liverani} for more general perturbations. More specifically, they considered (under a specific set of assumptions) the case when we have an additional weaker (not complete) norm $\lVert\cdot\rVert_w$ on $\fX$ 
and the perturbation is small in the strong-to-weak operator norm
\[\lVert \op{L} \rVert_{sw} := \sup\{\lVert Lx\rVert_w : \lVert x\rVert\le 1 \}.\]
It would be interesting to see whether radius bounds and regularity estimates as above can be derived in this setting, which is notably important in dynamical systems (for example, the perturbation induced on the ``transfer operator'' of a perturbed dynamical system of hyperbolic type is often large in the usual operator norm, but small in the strong-to-weak norm).
\end{rema}

\section{Prerequisites}\label{sec:prer}

\subsection{Analyticity in Banach spaces}\label{sec:analytic}

Analyticity in Banach spaces is very similar to analyticity on $\mathbb{R}$ or $\mathbb{C}$, but for the sake of completeness let us recall the definition and a few properties. Note that the definition we give is a strong one, some authors only asking for composition with analytic paths to be analytic. This weaker definition gives no uniformity with respect to the direction of a perturbation and is thus not suitable for our present purpose.

Let $\fspace{X}$ and $\fspace{Y}$ be two (real or complex) Banach spaces,
whose norms will both be denoted by $\lVert\cdot\rVert$.
A continuous, symmetric, multilinear operator
$\xi: \fspace{X}^k \to \fspace{Y}$ has an operator
norm denoted by $\lVert \xi\rVert$; if $x$ is a vector
in $\fspace{X}$, we set $\xi(x):=\xi(x,x,\dots,x)$
and we have $\lVert \xi(x)\rVert\le \lVert\xi\rVert \lVert x\rVert^k$.
We shall say that a sequence
$\xi_k:\fspace{X}^k \to \fspace{Y}$ of continuous symmetric $k$-linear operators
($k\ge 0$) is a series with positive radius of convergence
if the complex series
\[\sum_{k\ge 0} \lVert \xi_k\rVert z^k \]
has a positive radius of convergence in $\mathbb{C}$.

Let $F:U\subset\fspace{X}\to\fspace{Y}$ be a map defined
on an open subset of $\fspace{X}$. We say that $F$
is \emph{analytic} if for each $x\in U$ there is a series
of $k$-linear, symmetric, continuous operators
$\xi_{x,k}:\fspace{X}^k \to \fspace{Y}$ with positive
radius of convergence such that the following identity holds
for all $h$ in a neighborhood of the origin in $\fX$:
\begin{equation}
F(x+h) = \sum_{k\ge 0} \xi_{x,k}(h)
\label{eq:analytic}
\end{equation}
(note that as soon as $\lVert h \rVert$ is small enough, the sum is absolutely convergent, hence convergent).

An analytic map is smooth (in particular, Fr\'echet differentiable
and locally Lipschitz-continuous) and the operators $\xi_{x,k}$
are uniquely defined by $F$. Moreover it suffices to check 
\eqref{eq:analytic} at a point $x$ to have a similar expansion
$F(y+h)= \sum \xi_{y,k}(h)$ for all $y$ in a neighborhood of $x$.

\subsection{The Implicit Function Theorem}\label{sec:IFT}

The implicit function theorem is well-known for smooth maps between finite-dimensional spaces; it holds as well in the analytic regularity, for maps between Banach spaces, with basically the same proof (see e.g. \cite{Chae}, \cite{Whi}).

\begin{theo*}[Implicit function theorem]
Let $F:U\subset\fspace{X}\times\fspace{Y}\to\fspace{Z}$ be an analytic map defined on an open set of the product space, such that $F(x_0,y_0)=0$ for some $(x_0,y_0)\in U$. If $\partial_2 F_{(x_0,y_0)}:\fspace{Y}\to\fspace{Z}$ is a linear isomorphism of Banach spaces, then there is an analytic map
$Y:\fspace{V}\to \fspace{Y}$ defined in a neighborhood $\fspace{V}$ of $x_0$ such that $Y(x_0)=y_0$ and for all $x\in\fspace{V}$, $F(x,Y(x))=0$.
Moreover for each $x$ close enough to $x_0$, $y=Y(x)$ is the only solution to $F(x,y)=0$ in a neighborhood of $y_0$.
\end{theo*}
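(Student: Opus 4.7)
My plan is to deduce the analytic implicit function theorem from the Banach contraction principle applied to a suitable rewriting of the equation, and then promote the resulting continuous solution first to a smooth one and finally to an analytic one. Set $A := \partial_2 F_{(x_0,y_0)}$, which is invertible by hypothesis, and recast $F(x,y)=0$ as the fixed-point equation $T_x(y)=y$ with $T_x(y) := y - A^{-1}F(x,y)$. I would first show that for suitable $r,r'>0$, the map $T_x$ sends $\overline{B(y_0,r)}$ into itself as a $\tfrac12$-contraction, uniformly in $x\in B(x_0,r')$. The contraction property follows from $\partial_2 T_x = \Id - A^{-1}\partial_2 F$ vanishing at $(x_0,y_0)$ and depending continuously on $(x,y)$; the self-map property follows from $\lVert T_x(y_0)-y_0\rVert = \lVert A^{-1}F(x,y_0)\rVert$ being small for $x$ near $x_0$ since $F(x_0,y_0)=0$. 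Banach's theorem then produces a unique fixed point $Y(x)\in \overline{B(y_0,r)}$ for each such $x$, and subtracting the fixed-point equations at two parameters yields Lipschitz dependence of $Y$ on $x$ together with the claimed local uniqueness.

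Next I would establish Fréchet differentiability. Writing $0 = F(x+h,Y(x+h)) - F(x,Y(x))$, using the first-order Taylor expansion of $F$ at $(x,Y(x))$, and invoking the already-established Lipschitz estimate on $Y$, one identifies
\[DY(x) = -(\partial_2 F_{(x,Y(x))})^{-1}\,\partial_1 F_{(x,Y(x))},\]
which is manifestly continuous in $x$ (invertibility of $\partial_2 F_{(x,Y(x))}$ near $(x_0,y_0)$ follows from openness of the invertible operators together with continuity of $(x,y)\mapsto \partial_2 F_{(x,y)}$). Iterating this argument yields $C^\infty$-regularity and expresses the formal Taylor coefficients of $Y$ at $x_0$ as explicit polynomial combinations of partial multilinear derivatives of $F$ and of $A^{-1}$.

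The genuinely delicate step is to upgrade smoothness to analyticity in the Banach sense recalled in Section \ref{sec:analytic}. I would handle this by the method of majorants: starting from bounds $\lVert\eta_{p,q}\rVert\le M R^{-(p+q)}$ on the bihomogeneous coefficients of the expansion of $F$ at $(x_0,y_0)$, one builds a scalar majorant equation $\tilde F(s,t)=0$ whose unique analytic solution $\tilde Y$ near the origin dominates term by term the formal operator-valued series for $Y$. A positive radius of convergence for $\tilde Y$ then transfers to $Y$, and the uniqueness from the Banach fixed-point construction guarantees that the sum of the resulting power series really coincides with $Y$ on a neighborhood of $x_0$. An alternative route, perhaps cleaner in the real case, is to complexify: running the very same contraction argument with $x$ ranging in a complex neighborhood produces a holomorphic $Y$, from which real analyticity in the Fréchet sense is immediate. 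I expect this majorant estimate (or, equivalently, the complex-parameter contraction bound) to be the step requiring the most care, since everything preceding it is essentially a parametric version of the Banach fixed-point theorem.
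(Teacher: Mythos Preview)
Your outline is a correct and standard route to the analytic implicit function theorem in Banach spaces: contraction mapping for existence and local uniqueness, the usual formula $DY(x) = -(\partial_2 F)^{-1}\partial_1 F$ for smoothness, and then majorant series (or complexification) to secure analyticity. There is nothing to object to in the strategy, and the step you flag as delicate---controlling the operator-norm growth of the Taylor coefficients of $Y$ via a scalar majorant equation---is indeed the only place where real work is needed.

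That said, you should be aware that the paper does not actually prove this theorem. It is stated in Section~\ref{sec:IFT} purely as a prerequisite, with the remark that ``it holds as well in the analytic regularity, for maps between Banach spaces, with basically the same proof'' and a pointer to the references \cite{Chae} and \cite{Whi}. So there is no ``paper's own proof'' to compare against; your proposal is essentially the argument one would find in those references. If your goal is to match the paper, the expected move here is simply to invoke the theorem as known.
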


Here $\partial_2$ denotes the second partial derivative of $F$, i.e.
$\partial_2 F_{(x,y)}(h) = DF_{(x,y)}(0,h)$. 

\begin{rema}
To treat the case when $\op{L}_0$ is closed rather than bounded, one needs a more general Implicit Function Theorem, suitable for a map of the form
$F(x,y)=Ay +b(x,y)$ where $b$ is analytic and $A$ is linear and closed with domain $\fspace{D}\subset \fspace{Y}$. Such an Implicit Function Theorem is easily deduced from the above one by endowing $\fspace{D}$ with the graph norm $\max(\lVert\cdot\rVert,\lVert A\cdot \rVert)$ making it a Banach space. Then $Ay+b(x,y)$ defines an analytic map from an open set of $\fspace{X}\times\fspace{D}$ and the above theorem yields an implicit function $Y:\fspace{V}\subset\fspace{X}\to \fspace{D}$, which is still analytic when seen with target $\fspace{Y}$.

The key point to observe is that in the proof of analyticity of the eigendata we can replace the factor $\fX\times\mathbb{K}$ in the source by $\fspace{D}\times\mathbb{K}$, but let the target be $\fX\times\mathbb{K}$. Then we need that $\op{L}_0-\lambda_0$ be invertible from $\fspace{D}\cap G_0$ to $G_0$, with \emph{bounded} inverse (which is the usual hypothesis).
\end{rema}

\subsection{Metric derivative}\label{sec:metric}

Our effective estimates are obtained by controling the evolution of the quantities $\tau$ and $\gamma$ when the operator $\op{L}$ moves away from $\op{L}_0$. We shall use differential inequalities to compare $\tau$ and $\gamma$ to the solution of a system of ODE, with the slight complication that $\tau$ and $\gamma$ are not differentiable. We shall rely on the simple notion of the \emph{metric derivative}, also named \emph{pointwise Lipschitz constant}, of a function $f:\fY\to \mathbb{K}$, which we denote by $\llip$:
\[\llip f (x) := \limsup_{r\to 0} \sup_{y\in B(x,r)}\frac{\lvert f(x)-f(y)\rvert}{\lVert x-y\rVert}.\]
Of course, if $f$ is (Fréchet) differentiable then $\llip f=\lVert D f\rVert$, and if $f$ is $C$-Lipschitz then $\llip f\le C$.

A way to reword this definition is by saying that $\llip f(x)$ is the least constant $C$ such that
\[\lvert f(x)- f(y)\rvert \le C \lVert x-y\rVert +o(\lVert x-y\rVert) \quad\mbox{as } y\to x. \]
This makes it easy to check that for all (locally Lipschitz, say) functions $f,g, (f_i)_{i\in I}:\fY\to\mathbb{K}$ we have
\begin{align*}
\llip \lvert f\rvert &\le \llip f  \\
\llip (fg) &\le (\llip f) \lvert g\rvert + \lvert f\rvert(\llip g) \\
\llip \sup_{i\in I}(f_i) &\le \sup_{i\in I}(\llip f_i)
\end{align*}
and if $f$ takes it values in a Banach space and is differentiable,
\[\llip \lVert f\rVert \le \lVert Df\rVert.\]

Moreover, we have the usual comparison result for differential inequalities (which we state here in a version which is easy to prove and sufficient for our purpose, but certainly less general than possible).
\begin{prop}\label{prop:comparison}
Let $f:\fspace{V}\subset \fY\to\mathbb{R}$ be a function defined on a convex open set of a Banach space and $F: [0,+\infty) \to [0,+\infty)$ be a non-decreasing locally Lipschitz function.
Fix $x_0\in\fspace{V}$ and let $s:[0,R)\to \mathbb{R}$ (with $R$ finite) be the solution to $\big(s'=F(s), s(0)=\lvert f(x_0)\rvert\big)$.

If $\llip f(x)\le F(\lvert f(x)\rvert)$ for all $x\in \fspace{V}$, then for all 
$x\in \fspace{V}\cap B(x_0,R)$ we have
\[ \lvert f(x)\rvert \le s(\lvert x-x_0\rvert).\]
\end{prop}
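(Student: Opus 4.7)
The plan is to reduce the Banach-space differential inequality to a one-dimensional one along the segment joining $x_0$ to $x$, and then invoke a classical scalar comparison principle. Fix $x\in\fspace{V}\cap B(x_0,R)$, let $r=\lVert x-x_0\rVert < R$, and parametrize the segment by arclength: set $\gamma(t) = x_0 + \frac{t}{r}(x-x_0)$ for $t\in[0,r]$. By convexity, $\gamma(t)\in\fspace{V}$ for every $t$. Define $g:[0,r]\to[0,+\infty)$ by $g(t) := \lvert f(\gamma(t))\rvert$. The function $g$ is locally Lipschitz (since $f$ is, as its metric derivative is bounded on compact subsets of $\fspace{V}$), hence almost-everywhere differentiable, and in particular admits upper Dini derivatives.

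The first key step is to show that $D^+ g(t) \le F(g(t))$ for all $t\in[0,r)$. Writing out the definition of $\llip f$ at $\gamma(t)$ and using $\lVert\gamma(t+h)-\gamma(t)\rVert=h$ for small $h\ge 0$, we get
\[\lvert f(\gamma(t+h))-f(\gamma(t))\rvert \le \llip f(\gamma(t))\cdot h + o(h) \le F(g(t))\cdot h + o(h),\]
from which, using $\bigl\lvert\lvert a\rvert-\lvert b\rvert\bigr\rvert \le \lvert a-b\rvert$, we conclude $g(t+h)-g(t)\le F(g(t))h+o(h)$ and hence $D^+g(t)\le F(g(t))$.

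The second step is the scalar comparison. We have $g(0)=\lvert f(x_0)\rvert = s(0)$, $s'=F(s)$, $D^+g\le F(g)$, and $F$ is non-decreasing and locally Lipschitz. The classical argument is to fix $\varepsilon>0$ and let $s_\varepsilon$ solve $s_\varepsilon' = F(s_\varepsilon)+\varepsilon$, $s_\varepsilon(0)=s(0)+\varepsilon$; since $s_\varepsilon$ depends continuously on $\varepsilon$ on $[0,r]$ (as $r<R$, and the ODE is Lipschitz), $s_\varepsilon\to s$ uniformly on $[0,r]$ as $\varepsilon\to 0$. I claim $g(t)<s_\varepsilon(t)$ on $[0,r]$: otherwise let $t_0$ be the first contact time. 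At $t_0$ we have $g(t_0)=s_\varepsilon(t_0)$ while $g\le s_\varepsilon$ just before, so $D^+g(t_0)\ge s_\varepsilon'(t_0) = F(s_\varepsilon(t_0))+\varepsilon = F(g(t_0))+\varepsilon$, contradicting $D^+g(t_0)\le F(g(t_0))$. Letting $\varepsilon\to 0$ yields $g(t)\le s(t)$ on $[0,r]$; evaluated at $t=r$ this is the desired inequality $\lvert f(x)\rvert \le s(\lVert x-x_0\rVert)$.

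The main (mild) obstacle is the rigorous handling of the Dini derivative estimate and of the non-strict-to-strict comparison via the $\varepsilon$-regularized solution, but both are standard once one notices that the straight-line parametrization has unit speed and that the metric derivative $\llip$ dominates the directional increments of $\lvert f\rvert$.
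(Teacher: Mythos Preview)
Your reduction to the segment and the scalar differential inequality is exactly the paper's first move, and your $\varepsilon$-perturbation device (solving $s_\varepsilon'=F(s_\varepsilon)+\varepsilon$, $s_\varepsilon(0)=s(0)+\varepsilon$) is a standard variant of the paper's choice $s(t)+\varepsilon e^{2Ct}$. So the overall strategy matches.

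There is, however, a genuine slip in your contact-point argument. From ``$g\le s_\varepsilon$ just before $t_0$ and $g(t_0)=s_\varepsilon(t_0)$'' you \emph{cannot} conclude $D^+g(t_0)\ge s_\varepsilon'(t_0)$: information about $t<t_0$ constrains the \emph{left} Dini derivatives, not the right one. What actually follows is $D_-g(t_0)\ge s_\varepsilon'(t_0)=F(g(t_0))+\varepsilon$. This still yields the contradiction, because your first step really proves $\llip g(t)\le F(g(t))$ (the metric derivative bound, valid for increments on both sides), which dominates all four Dini derivatives in absolute value; in particular $D_-g(t_0)\le F(g(t_0))$. So the fix is either to replace $D^+$ by $D_-$ at the contact point and invoke the full $\llip$ bound, or to argue as the paper does: set $T=\sup\{t_1:g\le s_\varepsilon\text{ on }[0,t_1]\}$, observe that if $T<r$ there are points just \emph{after} $T$ where $g>s_\varepsilon$, and derive the contradiction from $D^+g(T)\le F(g(T))=s_\varepsilon'(T)-\varepsilon$, which forces $g<s_\varepsilon$ on a right-neighbourhood of $T$.
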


\begin{proof}
We first restrict ourselves to dimension $1$. Let $x\in \fspace{V}$ such that $\lVert x-x_0\rVert <R$ and consider $x_t=x_0+t(x-x_0)/\lVert x-x_0\rVert$ and $g(t)=\lvert f(x_t)\rvert$
for $t\in [0,R)$. Then $\llip g(t) \le \llip f(x_t) \le F(g(t))$.

Let $C$ be a Lipschitz constant for $F$, valid on $[0,R]$. Given $\varepsilon>0$, consider the set
\[A=\big\{ t_0\in [0,R) \,\big|\, \forall t\le t_0: g(t) \le s(t) + \varepsilon e^{2Ct}\big\}.\]
We have $0\in A$ by the initial data imposed on $s$, and $A$ is clearly an interval closed in $[0,R)$. Assume $T:=\sup A<R$; then we have 
$g(T) \le s(T) +\varepsilon e^{2CT}$ and thus
\[\llip g(T)\le F(g(T)) \le F(s(T)+\varepsilon e^{2CT})\le s'(T) + C\varepsilon e^{2CT}.\]
For $t\to T$ and $t>T$, taking the difference between
\[g(t)\le s(T)+\varepsilon e^{2CT} + (s'(T)+C\varepsilon e^{2CT})(t-T) + o(t-T)\]
and
\[s(t)+\varepsilon e^{2Ct} =  s(T)+\varepsilon e^{2CT} + (s'(T)+2C\varepsilon e^{2CT})(t-T) + o(t-T) \]
we get
\[g(t)\le s(t)+\varepsilon e^{2Ct} -Ce^{2CT}(t-T)+ o(t-T).\]
Therefore there exists a $T'>T$ such that $g(t) \le s(t)+\varepsilon e^{2Ct}$ for $t\in [0,T')$, contradicting the definition of $T$. Thus $T=R$ and for all $t\in [0,R)$ and all $\varepsilon>0$ we have $g(t) \le s(t) + \varepsilon e^{2Ct}$. Passing to the limit when $\varepsilon\to 0$, we get the desired conclusion.
\end{proof}

\section{Derivatives of the eigendata}\label{sec:derivatives}

We fix a bounded operator $\op{L}_0$ defined on $\fX$ to itself having a simple isolated eigenvalue $\lambda_0$, an eigenvector $u_0$ and an eigenform $\phi_0$. For simplicity, we shall assume that $\phi_0u_0=1$. This has no incidence on statements and quantities which are invariant under changing this normalization, such as estimates on $\lambda$, the value of $\tau$, etc. Other cases can be recovered by homogeneity considerations if necessary.


\subsection{The perturbed eigenvalue is simple isolated}

Our starting point is the qualitative theorem stated and proved in the introduction, according to which there are analytic maps $\lambda, u$ defined in a neighborhood $\fspace{V}$ of $\op{L}_0$ in $\fX$, with values in $\mathbb{K}$ and $\fX$ respectively, such that $\op{L}u_\op{L}=\lambda_\op{L} u_\op{L}$; moreover $\lambda_\op{L}$ is the only eigenvalue of $\op{L}$ near $\lambda_0$. Up to further restrictions we assume $\fspace{V}$ to be star-shaped with respect to $\op{L}_0$.

We moreover apply the same result to $\op{L}_0^*$ to obtain an analytic map $\phi:\fspace{V}\to \fX^*$ such that $\phi_\op{L}$ is an eigenvector of $\op{L}^*$ for an eigenvalue that is close to $\lambda_0$ and must thus be $\lambda_\op{L}$. Then $\ker\phi_\op{L}=: G_\op{L}$ is a closed hyperplane which is $\op{L}$-invariant and complementary to both $\langle u_\op{L}\rangle$ and $\langle u_0\rangle$. The hyperplanes
$G_\op{L}$ and $G_0$ can then be identified through $\pi_0$. Since $\op{L}-\lambda_\op{L}$ is close to $\op{L}_0-\lambda_0\in\fspace{B}(G_0)$ through this identification, it must be invertible. This proves the following classical strengthening of the qualitative theorem stated in the introduction.
\begin{prop}\label{prop:open}
Each $\op{L}$ in some neighborhood $\fspace{V}$ of $\op{L}_0$ (which has possibly been further reduced) has $\lambda_\op{L}$ as \emph{simple isolated} eigenvalue.
\end{prop}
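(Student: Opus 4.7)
The plan is to apply the qualitative theorem of the introduction in parallel to $\op{L}_0$ and to its dual $\op{L}_0^*$. For $\op{L}_0$ it already provides analytic maps $u:\fspace{V}\to\fX$ and $\lambda:\fspace{V}\to\mathbb{K}$; for $\op{L}_0^*$, the scalar $\lambda_0$ is also simple isolated (with eigenvector $\phi_0$ and invariant complement $u_0^\perp$, on which $\op{L}_0^*-\lambda_0$ is invertible by duality with $(\op{L}_0-\lambda_0)|_{G_0}$), so the theorem gives an analytic eigenform, which upon precomposition with the linear isometry $\op{L}\mapsto\op{L}^*$ yields an analytic map $\phi:\fspace{V}'\to\fX^*$. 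Its associated eigenvalue of $\op{L}^*$ is close to $\lambda_0$ and therefore coincides with $\lambda_\op{L}$, by uniqueness in the qualitative theorem together with our convention that $\op{L}$ and $\op{L}^*$ share spectrum.

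Next I would set $G_\op{L}:=\ker\phi_\op{L}$, a closed hyperplane. Invariance under $\op{L}$ is immediate from $\phi_\op{L}(\op{L}x)=\lambda_\op{L}\phi_\op{L}(x)$. Continuity of $\op{L}\mapsto(\phi_\op{L},u_\op{L})$ at $\op{L}_0$, together with the normalization $\phi_0(u_0)=1$, ensures that both $\phi_\op{L}(u_\op{L})$ and $\phi_\op{L}(u_0)$ remain nonzero in a further restriction of $\fspace{V}$; hence $G_\op{L}$ is a topological complement to each of $\langle u_\op{L}\rangle$ and $\langle u_0\rangle$, so that $\fX=\langle u_\op{L}\rangle\oplus G_\op{L}$.

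The heart of the argument is the invertibility of $(\op{L}-\lambda_\op{L})|_{G_\op{L}}$. For this I would identify $G_\op{L}$ with the fixed space $G_0$ via an analytic family of isomorphisms. Define $J_\op{L}:G_0\to\fX$ by $J_\op{L}(y)=y-\frac{\phi_\op{L}(y)}{\phi_\op{L}(u_0)}u_0$; a direct check gives $\phi_\op{L}\circ J_\op{L}=0$ and $\pi_0\circ J_\op{L}=\Id_{G_0}$, so $J_\op{L}$ lands in $G_\op{L}$ and is a two-sided inverse of $\pi_0|_{G_\op{L}}$ (which is a continuous bijection by the transversality just established, hence an isomorphism by the open mapping theorem). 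The operator $\op{T}_\op{L}:=\pi_0(\op{L}-\lambda_\op{L})J_\op{L}\in\fspace{B}(G_0)$ then depends continuously (indeed analytically) on $\op{L}$, with $\op{T}_{\op{L}_0}=(\op{L}_0-\lambda_0)|_{G_0}$, which is invertible by the simple-isolated hypothesis on $\op{L}_0$. Since the invertible elements of $\fspace{B}(G_0)$ form an open set, $\op{T}_\op{L}$ remains invertible on a smaller neighborhood; using the $(\op{L}-\lambda_\op{L})$-invariance of $G_\op{L}$ one checks $(\op{L}-\lambda_\op{L})|_{G_\op{L}}=J_\op{L}\op{T}_\op{L}\pi_0|_{G_\op{L}}$, which is then invertible as a conjugate of an invertible operator.

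The main obstacle is really only the bookkeeping needed to transport invertibility through the moving family of hyperplanes $G_\op{L}$; once the analytic identification with the fixed space $G_0$ via $(\pi_0|_{G_\op{L}},J_\op{L})$ is in place, openness of invertibility in $\fspace{B}(G_0)$ concludes the proof.
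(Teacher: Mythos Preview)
Your argument is correct and follows the same line as the paper's own proof: apply the qualitative theorem to $\op{L}_0^*$ to produce $\phi_\op{L}$, set $G_\op{L}=\ker\phi_\op{L}$, and identify $G_\op{L}$ with the fixed space $G_0$ via $\pi_0$ to reduce invertibility of $(\op{L}-\lambda_\op{L})|_{G_\op{L}}$ to openness of invertibles in $\fspace{B}(G_0)$. The only difference is cosmetic: where the paper simply says ``$G_\op{L}$ and $G_0$ can then be identified through $\pi_0$, and $\op{L}-\lambda_\op{L}$ being close to $\op{L}_0-\lambda_0\in\fspace{B}(G_0)$ through this identification, it must be invertible'', you have written out the inverse $J_\op{L}$ of $\pi_0|_{G_\op{L}}$ explicitly and named the conjugated operator $\op{T}_\op{L}$.
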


We insist on the difference between the two eigendata $\lambda_\op{L}$ and $u_\op{L}$: $\lambda$ is completely specified, while $u$ is subject to normalization, as for every analytic function $f:\fspace{V}\to \mathbb{K}$, the map $e^fu$ also defines a eigenvector. Similarly, $\phi_\op{L}$ can be replaced by $e^g\phi$ freely. We shall assume that $u$ is as constructed above (i.e. $\phi_0 u_\op{L} \equiv 1$) but rescale $\phi_\op{L}$
to enforce the relation
\begin{equation*}
\phi_\op{L} u_\op{L} = 1 \quad \forall \op{L}\in\fspace{V}
\end{equation*}
which we assume from now on.

\subsection{First derivative of the eigenvalue and the eigenvector}

As is classical when one uses the Implicit Function Theorem, the derivatives of the implicit function can be recovered by differentiating $F(\op{L},u_\op{L},\lambda_\op{L})\equiv 0$, yielding the following.

\begin{prop}\label{prop:DU}
On $\fspace{V}$ we have 
\[D\lambda_\op{L}(\op{M}) = \phi_\op{L}\op{M} u_\op{L} \qquad \forall \op{M}\in\BX\]
and there is an analytic map $a:\fspace{V}\to \BX^*$ vanishing at $\op{L}_0$ such that at all $\op{L}\in\fspace{V}$:
\[Du_\op{L}(\op{M})=-\op{S}_\op{L} \op{M}u_\op{L} + a_\op{L}(\op{M}) u_\op{L}.\]
\end{prop}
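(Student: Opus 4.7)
The plan is to differentiate the two relations characterizing $(u_\op{L},\lambda_\op{L})$ as analytic functions of $\op{L}$ on $\fspace{V}$: the eigenvalue equation $\op{L}u_\op{L}=\lambda_\op{L}u_\op{L}$ and the normalization $\phi_0 u_\op{L}\equiv 1$ inherited from the Rosenbloom construction. The analyticity of $(u,\lambda)$ on $\fspace{V}$ (from the introduction and Proposition \ref{prop:open}) legitimizes all derivatives below, and the formulas will then be read off by applying $\phi_\op{L}$, then $\op{S}_\op{L}$, to the differentiated equation.

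I would first differentiate $\op{L}u_\op{L}=\lambda_\op{L}u_\op{L}$ in a direction $\op{M}\in\BX$ to get
\[\op{M}u_\op{L}+(\op{L}-\lambda_\op{L})\,Du_\op{L}(\op{M})=D\lambda_\op{L}(\op{M})\,u_\op{L},\]
and then apply the eigenform $\phi_\op{L}$ on the left. The middle term vanishes because $\phi_\op{L}(\op{L}-\lambda_\op{L})=0$, and $\phi_\op{L}u_\op{L}=1$ immediately yields $D\lambda_\op{L}(\op{M})=\phi_\op{L}\op{M}u_\op{L}$, the first formula. Substituting this back into the above equation gives
\[(\op{L}-\lambda_\op{L})\,Du_\op{L}(\op{M})=-\pi_\op{L}\op{M}u_\op{L},\]
using $\pi_\op{L}=\Id-\phi_\op{L}(\cdot)u_\op{L}$. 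Next I would apply the reduced resolvent $\op{S}_\op{L}$ on the left and use the two identities $\op{S}_\op{L}(\op{L}-\lambda_\op{L})=\pi_\op{L}$ and $\op{S}_\op{L}\pi_\op{L}=\op{S}_\op{L}$, both straightforward consequences of the decomposition $\fX=\langle u_\op{L}\rangle\oplus G_\op{L}$, to obtain $\pi_\op{L}Du_\op{L}(\op{M})=-\op{S}_\op{L}\op{M}u_\op{L}$. Since $Du_\op{L}(\op{M})-\pi_\op{L}Du_\op{L}(\op{M})$ is a scalar multiple of $u_\op{L}$, this produces the desired decomposition
\[Du_\op{L}(\op{M})=-\op{S}_\op{L}\op{M}u_\op{L}+a_\op{L}(\op{M})\,u_\op{L},\qquad a_\op{L}(\op{M}):=\phi_\op{L}\bigl(Du_\op{L}(\op{M})\bigr).\]

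It remains to verify the two qualitative properties of $a$. Analyticity of $\op{L}\mapsto a_\op{L}\in\BX^*$ follows immediately from the analyticity of $\phi$ and of the Fréchet derivative $Du$. To show $a_{\op{L}_0}=0$, I would differentiate the relation $\phi_0 u_\op{L}\equiv 1$ valid on $\fspace{V}$ to obtain $\phi_0 Du_\op{L}(\op{M})\equiv 0$; specializing to $\op{L}=\op{L}_0$, where $\phi_\op{L}=\phi_0$, gives $a_0(\op{M})=\phi_0 Du_0(\op{M})=0$ for every $\op{M}$. I do not expect any real obstacle: the only step worth a second glance is the inversion identity $\op{S}_\op{L}(\op{L}-\lambda_\op{L})=\pi_\op{L}$, which simply records that $\op{S}_\op{L}$ inverts $\op{L}-\lambda_\op{L}$ on the invariant complement $G_\op{L}$ and vanishes on $\langle u_\op{L}\rangle$.
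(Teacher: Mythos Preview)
Your argument is correct and follows essentially the same route as the paper: differentiate the defining relations $\op{L}u_\op{L}=\lambda_\op{L}u_\op{L}$ and $\phi_0 u_\op{L}\equiv 1$, apply $\phi_\op{L}$ to isolate $D\lambda_\op{L}$, and project to $G_\op{L}$ (the paper applies $\pi_\op{L}$ directly, you equivalently compose with $\op{S}_\op{L}$) to identify the $G_\op{L}$-component of $Du_\op{L}$, then set $a_\op{L}(\op{M})=\phi_\op{L}(Du_\op{L}(\op{M}))$. Your explicit checks that $a$ is analytic and that $a_{\op{L}_0}=0$ are welcome additions the paper leaves implicit.
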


\begin{proof}
Recall that $u$ and $\lambda$ are obtained by applying the Implicit Function Theorem to the map
\begin{align*}
F : \BX \times (\fX \times \mathbb{K}) &\to \fX \times \mathbb{K} \\
  (\op{L},u,\lambda) &\mapsto (\op{L}u-\lambda u,\, \phi_0(u)-1).
\end{align*}
Differentiating 
$F(\op{L},u_\op{L},\lambda_\op{L})\equiv 0$ in a direction $\op{M}\in\BX$ we obtain
\[\Big((\op{L}-\lambda_\op{L}) Du_{\op{L}}(\op{M})+\op{M} u_\op{L}-D\lambda_{\op{L}}(\op{M}) u_\op{L},\, \phi_0(Du_{\op{L}}(\op{M}))\Big) = 0.\]
Applying $\phi_\op{L}$ to the first member and using $\phi_\op{L}(\op{L}-\lambda_\op{L})=0$, we get $D\lambda_{\op{L}}(\op{M}) = \phi_\op{L}\op{M}u_\op{L}$.
 Applying $S_\op{L}$ to the first member and using $\pi_\op{L}(\op{L}-\lambda_\op{L})=(\op{L}-\lambda_\op{L})\pi_\op{L}$ we get $\pi_\op{L}Du_{\op{L}}(\op{M})=-\op{S}_\op{L}\op{M}u_\op{L}$, and setting $a_\op{L}(\op{M}):=\phi_\op{L}(Du_\op{L}(\op{M}))$ we are done.
\end{proof}

\begin{rema}
In the general case where we do not assume the normalization $\phi_\op{L}u_\op{L}\equiv 1$, by invariance with respect to normalization we have
\[D\lambda_\op{L}(\op{M}) = \frac{\phi_\op{L}\op{M} u_\op{L}}{\phi_\op{L} u_\op{L}}.\]
\end{rema}

\begin{rema}
We have
$D\big[e^fU\big]_\op{L}(\op{M})=Df_\op{L}(\op{M}) e^{f_\op{L}}u_\op{L} + e^{f_\op{L}} Du_\op{L}(\op{M})$
so that, for any fixed $\op{L}$, by choosing $f$ such that $Df_\op{L}(\cdot)=-\phi_\op{L}(Du_\op{L}(\cdot))$ we can ensure $D\big[e^fU\big]_\op{L}(\op{M})\in G_\op{L}$ for all $\op{M}$. However we may not be able to ensure this property simultaneously at all $\op{L}$, because we would need $\op{L}\mapsto-\phi_\op{L}(Du_\op{L}(\cdot))$ to be a closed $1$-form. 
\end{rema}

\subsection{First derivative of the eigenform}

Let $G_\op{L}^*=u_\op{L}^\perp$ be the stable complement of $\langle\phi_\op{L}\rangle$ for $\op{L}^*$ and $\pi_\op{L}^*$ the corresponding projection.  Note that $\pi_\op{L}^*$ also happens to be the dual operator to $\pi_\op{L}$, and that as before $(\op{L}^*-\lambda_\op{L})^{-1}$ is by convention a map from $G_\op{L}^*$ to itself.

\begin{lemm}\label{lemm:subt}
For all $\psi\in\fX^*$, we have
\[ (\op{L}^*-\lambda_\op{L})^{-1}\pi_\op{L}^*\psi = \psi(\op{L}-\lambda_\op{L})^{-1}\pi_\op{L}\]
i.e. $\op{S}_\op{L}^*=\op{S}_{\op{L}^*}$.
As a consequence, $\gamma_{\op{L}^*} = \gamma_\op{L}$.
It also holds $\tau_{\op{L}^*}=\tau_\op{L}$.
\end{lemm}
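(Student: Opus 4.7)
The plan is to verify the identity by characterizing the reduced resolvent through its defining algebraic properties and checking that the dual operator $\op{S}_\op{L}^*$ satisfies them for $\op{L}^*$. Concretely, I view $\op{S}_\op{L}$ as the unique operator $\op{T}\in\BX$ whose image lies in $G_\op{L}$, which vanishes on $\langle u_\op{L}\rangle$, and which satisfies $(\op{L}-\lambda_\op{L})\op{T}=\pi_\op{L}$; this uniqueness is just a reformulation of the definition.

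First I would compute the duals of the rank-one objects. For $\psi\in\fX^*$ and $x\in\fX$, a direct calculation gives $(\op{P}_\op{L}^*\psi)(x)=\psi(\phi_\op{L}(x)u_\op{L})=\psi(u_\op{L})\phi_\op{L}(x)$, so $\op{P}_\op{L}^*\psi=\psi(u_\op{L})\phi_\op{L}$, and this is exactly the rank-one projection of $\fX^*$ onto $\langle\phi_\op{L}\rangle$ along $G_\op{L}^*=u_\op{L}^\perp$, i.e.\ $\op{P}_\op{L}^*=\op{P}_{\op{L}^*}$ (using the normalization $\phi_\op{L}u_\op{L}=1$). Consequently $\pi_\op{L}^*=\Id_{\fX^*}-\op{P}_\op{L}^*=\pi_{\op{L}^*}$. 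The equality $\tau_{\op{L}^*}=\tau_\op{L}$ then drops out immediately from $\tau_\op{L}=\lVert\op{P}_\op{L}\rVert=\lVert\op{P}_\op{L}^*\rVert=\lVert\op{P}_{\op{L}^*}\rVert=\tau_{\op{L}^*}$.

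Next I would check the three defining properties for $\op{S}_\op{L}^*$ seen as a candidate for $\op{S}_{\op{L}^*}$. (i) $\op{S}_\op{L}^*$ takes values in $G_\op{L}^*=u_\op{L}^\perp$: for any $\psi$, $(\op{S}_\op{L}^*\psi)(u_\op{L})=\psi(\op{S}_\op{L} u_\op{L})=0$ since $\op{S}_\op{L} u_\op{L}=(\op{L}-\lambda_\op{L})^{-1}\pi_\op{L}u_\op{L}=0$. (ii) $\op{S}_\op{L}^*$ vanishes on $\langle\phi_\op{L}\rangle$: for any $x$, $(\op{S}_\op{L}^*\phi_\op{L})(x)=\phi_\op{L}(\op{S}_\op{L} x)=0$ because $\op{S}_\op{L} x\in G_\op{L}=\ker\phi_\op{L}$. (iii) Dualizing the identity $(\op{L}-\lambda_\op{L})\op{S}_\op{L}=\pi_\op{L}$ gives $\op{S}_\op{L}^*(\op{L}^*-\lambda_\op{L})=\pi_\op{L}^*=\pi_{\op{L}^*}$, and likewise $(\op{L}^*-\lambda_\op{L})\op{S}_\op{L}^*=\pi_{\op{L}^*}$. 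By the uniqueness property stated above, applied now to $\op{L}^*$, we conclude $\op{S}_\op{L}^*=\op{S}_{\op{L}^*}$, which is exactly the displayed identity once one unfolds the notation $\psi(\op{L}-\lambda_\op{L})^{-1}\pi_\op{L}=\psi\circ\op{S}_\op{L}=\op{S}_\op{L}^*\psi$.

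Finally $\gamma_{\op{L}^*}=\lVert\op{S}_{\op{L}^*}\rVert=\lVert\op{S}_\op{L}^*\rVert=\lVert\op{S}_\op{L}\rVert=\gamma_\op{L}$. I do not foresee a real obstacle here: the whole content of the lemma is that dualization commutes with the construction of the reduced resolvent, and the only thing to be careful about is sticking to the paper's pairing convention (no conjugation, so $\fspace{L}^*$ has the same spectrum as $\fspace{L}$) when identifying $\pi_\op{L}^*$ with $\pi_{\op{L}^*}$ and checking the image/kernel conditions.
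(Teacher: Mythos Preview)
Your proof is correct and complete; it differs from the paper's argument mainly in packaging. The paper observes $\op{S}_\op{L}=\pi_\op{L}(\op{L}-\lambda_\op{L})^{-1}\pi_\op{L}$ and dualizes this symmetric form in one line to get $\op{S}_{\op{L}^*}=\op{S}_\op{L}^*$, then obtains $\gamma_{\op{L}^*}\le\gamma_\op{L}$ and invokes Hahn--Banach explicitly for the reverse inequality, and handles $\tau$ via the bidual embedding of $u_\op{L}$. You instead characterize $\op{S}_{\op{L}^*}$ by its defining algebraic properties and verify them for $\op{S}_\op{L}^*$, then appeal to the standard fact $\lVert T^*\rVert=\lVert T\rVert$ (which absorbs the Hahn--Banach step) for both $\gamma$ and $\tau$. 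Your route makes the identification $\pi_\op{L}^*=\pi_{\op{L}^*}$ explicit and is perhaps more transparent about why the uniqueness forces the identity; the paper's route is shorter once one spots the symmetric rewriting of $\op{S}_\op{L}$. Either way the content is the same.
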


The order of composition with $(\op{L}-\lambda_\op{L})$ and $\pi_\op{L}$ may seem wrong, but this is a subtlety in the definitions related to the domain of $(\op{L}^*-\lambda_\op{L})^{-1}$ (note that the other order of composition would not make sense).

\begin{proof}
We have $(\op{L}-\lambda_\op{L})^{-1}\pi_\op{L}=\pi_\op{L}(\op{L}-\lambda_\op{L})^{-1}\pi_\op{L}$ (and similarly in the dual) so that
\[(\op{L}^*-\lambda_\op{L})^{-1}\pi_\op{L}^*\psi = \pi_\op{L}^*(\op{L}^*-\lambda_\op{L})^{-1}\pi_\op{L}^*\psi = \psi\circ\big(\pi_\op{L}(\op{L}-\lambda_\op{L})^{-1}\pi_\op{L}\big) = \psi(\op{L}-\lambda_\op{L})^{-1}\pi_\op{L}.\]
We deduce $\gamma_{\op{L}^*}\le \gamma_\op{L}$, and the equality follows by using the Hahn-Banach Theorem to find a $\psi$ whose norm is realized by $(\op{L}-\lambda_\op{L})^{-1}\pi_\op{L}(x)$ where $x$ almost realizes $\gamma_\op{L}$.

Last, since the natural image of $u_\op{L}$ in the bidual $\fX^{**}$ is obviously the eigenvector of $\op{L}^{**}$ for $\lambda_\op{L}$, we have $\tau_{\op{L}^*}=\tau_\op{L}$.
\end{proof}

\begin{prop}
For all $\op{L}\in\fspace{V}$ it holds
\[D\phi_\op{L}(\op{M})=-\phi_\op{L}\op{M}\op{S}_\op{L}- a_\op{L}(\op{M}) \phi_\op{L}\]
where $a$ is the $1$-form on $\fspace{V}\subset \BX$ defined in Proposition \ref{prop:DU}.
\end{prop}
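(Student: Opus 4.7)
The plan is to adapt the differentiation strategy used for $u_\op{L}$ in Proposition~\ref{prop:DU} to the eigenform. What characterizes $\phi_\op{L}$ on $\fspace{V}$ is the pair of identities $\phi_\op{L}(\op{L}-\lambda_\op{L})=0$, viewed as an equality in $\fX^*$, and the normalization $\phi_\op{L} u_\op{L}\equiv 1$. I would differentiate each of these at $\op{L}$ in a direction $\op{M}\in\BX$, extracting the two components of $D\phi_\op{L}(\op{M})\in\fX^*$ along the decomposition $\fX=\langle u_\op{L}\rangle\oplus G_\op{L}$, and then recombine.

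First, differentiating $\phi_\op{L}\op{L}=\lambda_\op{L}\phi_\op{L}$ in the direction $\op{M}$ gives the identity
\[D\phi_\op{L}(\op{M})\,(\op{L}-\lambda_\op{L}) = D\lambda_\op{L}(\op{M})\,\phi_\op{L} - \phi_\op{L}\op{M}\]
in $\fX^*$. Composing on the right with $\op{S}_\op{L}$ and using $(\op{L}-\lambda_\op{L})\op{S}_\op{L}=\pi_\op{L}$ together with $\phi_\op{L}\op{S}_\op{L}=0$ (the latter because $\op{S}_\op{L}$ takes its values in $G_\op{L}=\ker\phi_\op{L}$), the $D\lambda_\op{L}$ term disappears and we obtain
\[D\phi_\op{L}(\op{M})\,\pi_\op{L} = -\phi_\op{L}\op{M}\op{S}_\op{L},\]
which determines $D\phi_\op{L}(\op{M})$ on the hyperplane $G_\op{L}$.

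Next, to pin down its value on the eigendirection, I would differentiate the normalization $\phi_\op{L} u_\op{L}\equiv 1$ to get $D\phi_\op{L}(\op{M})(u_\op{L}) = -\phi_\op{L}(Du_\op{L}(\op{M}))$. Plugging in the expression for $Du_\op{L}(\op{M})$ provided by Proposition~\ref{prop:DU} and using $\phi_\op{L}\op{S}_\op{L}=0$ together with $\phi_\op{L} u_\op{L}=1$, this collapses to $D\phi_\op{L}(\op{M})(u_\op{L}) = -a_\op{L}(\op{M})$.

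Finally, the two contributions are assembled via $\Id=\op{P}_\op{L}+\pi_\op{L}$: for any $x\in\fX$,
\[D\phi_\op{L}(\op{M})(x) = \phi_\op{L}(x)\,D\phi_\op{L}(\op{M})(u_\op{L}) + D\phi_\op{L}(\op{M})\pi_\op{L}(x) = -a_\op{L}(\op{M})\phi_\op{L}(x) - \phi_\op{L}\op{M}\op{S}_\op{L}(x),\]
which is precisely the claimed identity. I do not expect a real obstacle here; the only thing to watch, already emphasized in Lemma~\ref{lemm:subt}, is that since $D\phi_\op{L}(\op{M})$ lives in $\fX^*$ all compositions with $\op{L}-\lambda_\op{L}$, $\pi_\op{L}$, and $\op{S}_\op{L}$ must be performed on the right.
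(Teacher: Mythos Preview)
Your argument is correct. The paper takes a slightly different route: it applies Proposition~\ref{prop:DU} to the dual operator $\op{L}^*$ to obtain $D\phi_\op{L}(\op{M})=-\phi_\op{L}\op{M}\op{S}_\op{L}+b_\op{L}(\op{M})\phi_\op{L}$ for some unknown $1$-form $b$ (the translation from $\op{S}_{\op{L}^*}$ to $\op{S}_\op{L}$ requiring Lemma~\ref{lemm:subt}), and only then differentiates the normalization $\phi_\op{L} u_\op{L}\equiv 1$ to identify $b=-a$. You instead differentiate the primal relation $\phi_\op{L}(\op{L}-\lambda_\op{L})=0$ directly and compose with $\op{S}_\op{L}$ on the right, which bypasses both the duality step and Lemma~\ref{lemm:subt} entirely. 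Your approach is more self-contained for this particular computation; the paper's approach has the structural appeal of literally reusing Proposition~\ref{prop:DU}, at the cost of needing the identification $\op{S}_{\op{L}^*}=\op{S}_\op{L}^*$.
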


\begin{proof}
Applying Proposition \ref{prop:DU} to $\op{L}^*$, there must be an analytic map $b:\fspace{V}\to \BX^*$ such that 
\[D\phi_\op{L}(\op{M}) =(\lambda_\op{L}-\op{L}^*)^{-1} \pi_\op{L}^*(\phi_\op{L}  \op{M}) +b_\op{L}(\op{M})\phi_\op{L}.\]
Lemma \ref{lemm:subt} allows us to rewrite this as
\[D\phi_\op{L}(\op{M})= \phi_\op{L}  \op{M}  (\lambda_\op{L}-\op{L})^{-1}  \pi_\op{L}+b_\op{L}(\op{M})\phi_\op{L}= -\phi_\op{L}  \op{M} \op{S}_\op{L}+b_\op{L}(\op{M})\phi_\op{L}.\]
Differentiating $1 \equiv \phi_\op{L} u_\op{L}$ and using that $\op{S}_\op{L}u_\op{L}=0$ and that $\phi_\op{L}$ vanishes on the range $G_\op{L}$ of $\op{S}_\op{L}$ we then get
\begin{align*}
0 &=D(\phi u)_\op{L}(\op{M}) \\
 & = \big[-\phi_\op{L}  \op{M}  \op{S}_\op{L}+b_\op{L}(\op{M})\phi_\op{L}\big]u_\op{L} + \phi_\op{L}\big[-\op{S}_\op{L} \op{M}u_\op{L} + a_\op{L}(\op{M}) u_\op{L}\big] \\
 &= b_\op{L}(\op{M})\phi_\op{L} u_\op{L}+\phi_\op{L}(a_\op{L}(\op{M}) u_\op{L}).
\end{align*}
It follows $0=b_\op{L}(\op{M})+a_\op{L}(\op{M})$.
\end{proof}

\subsection{Further differentiation formulas}

We will now compute derivatives of other quantities and higher derivatives of $\lambda$. Unsurprisingly, the normalizing function $a$ will often disappear: it cannot impact the quantities that are normalization-insensitive.

It is sometime useful to consider the operator $\op{R}_\op{L}$ defined by $\op{L}=\lambda_{\op{L}} \op{P}_{\op{L}} + \op{R}_{\op{L}}$. It takes its values in $G_\op{L}$, vanishes on $u_\op{L}$ and therefore satisfies $\op{P}_\op{L}\op{R}_\op{L} = \op{R}_\op{L}\op{P}_\op{L} = 0$. The spectral gap condition can then be rephrased as an exponential decay of $\lVert \op{R}_\op{L}^n\rVert$.

Recall that the normalization $\phi_\op{L} u_\op{L}\equiv 1$ is assumed; we also gather the previously computed derivatives to ease future reference.
\begin{prop}\label{prop:derivatives}
For all $\op{L}$ having an ASIE, we have the following expressions:
\begin{enumerate}
\item $D\lambda_\op{L}(\op{M}) =  \phi_\op{L}  \op{M}  u_\op{L}$,
\item $Du_\op{L}(\op{M})=-\op{S}_\op{L} \op{M} u_\op{L} +a_\op{L}(\op{M}) u_\op{L}$,
\item $D\phi_\op{L}(\op{M})=- \phi_\op{L} \op{M} \op{S}_\op{L} -a_\op{L}(\op{M}) \phi_\op{L}$,
\item $D^2\lambda_\op{L}(\op{M}) = -2 \phi_\op{L}  \op{M}  \op{S}_\op{L} \op{M} u_\op{L}$,
\item $D\op{P}_\op{L}(\op{M}) =  -\phi_\op{L}  \op{M} \op{S}_\op{L}(\cdot) u_\op{L}-\phi_\op{L}(\cdot)\op{S}_\op{L}Mu_\op{L}$,  
\item $D\pi_\op{L}(\op{M}) = \phi_\op{L}  \op{M} \op{S}_\op{L}(\cdot) u_\op{L}+\phi_\op{L}(\cdot)\op{S}_\op{L}Mu_\op{L}$,
\item $D\op{S}_\op{L}(\op{M}) = -\op{S}_\op{L} \op{M}\op{S}_\op{L}(\cdot)+\phi_\op{L}(\cdot)\op{S}_\op{L}^2\op{M}u_\op{L} +(\phi_\op{L}\op{M}u_\op{L})\op{S}_\op{L}^2 +\big[\phi_\op{L}\op{M}\op{S}_\op{L}^2(\cdot)\big]u_\op{L}$,
\item $D^3\lambda_\op{L}(\op{M})=6\phi_\op{L}  \op{M}  \op{S}_\op{L}\big[\op{M}-\phi_\op{L}\op{M} u_\op{L}\big] \op{S}_\op{L} \op{M} u_\op{L}$,
\item as soon as $\lambda_\op{L}\neq0$, $D\big[\frac{1}{\lambda} \op{R}\big]_\op{L}(\op{M})=\frac{1}{\lambda_\op{L}}\op{M}-\frac{\phi_\op{L}\op{M} u_\op{L}}{\lambda_\op{L}^2}\op{L}+\phi_\op{L}  \op{M} \op{S}_\op{L}(\cdot) u_\op{L}+\phi_\op{L}(\cdot)\op{S}_\op{L}Mu_\op{L}$.
\end{enumerate}
\end{prop}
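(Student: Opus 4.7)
The first three formulas are already the content of Proposition \ref{prop:DU} and the previous proposition, so there is nothing new to prove there. The remaining formulas are consequences of the product (Leibniz) rule applied to the expressions appearing in items (i)--(iii). Since every quantity on the left is invariant under the normalization freedom $u \rightsquigarrow e^f u$, $\phi \rightsquigarrow e^{-f}\phi$, the normalizing $1$-form $a_\op{L}$ must disappear from every result; tracking this cancellation provides a useful sanity check throughout.

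For items (iv), (v), (vi), (viii), (ix) I just differentiate the preceding expressions term by term. For (iv) I differentiate $D\lambda_\op{L}(\op{M})=\phi_\op{L}\op{M}u_\op{L}$ in direction $\op{M}$: the two ``$a_\op{L}(\op{M})\phi_\op{L}\op{M}u_\op{L}$'' contributions coming respectively from $D\phi_\op{L}(\op{M})$ and $Du_\op{L}(\op{M})$ cancel, leaving $-2\phi_\op{L}\op{M}\op{S}_\op{L}\op{M}u_\op{L}$. For (v), writing $\op{P}_\op{L}=\phi_\op{L}(\cdot)u_\op{L}$ and applying the product rule, the two $a_\op{L}$ contributions again cancel; then (vi) follows from $\pi_\op{L}=\Id-\op{P}_\op{L}$. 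For (ix) one just notes that $\op{R}_\op{L}=\op{L}-\lambda_\op{L}\op{P}_\op{L}$, so
\[
\tfrac{1}{\lambda_\op{L}}\op{R}_\op{L}=\tfrac{1}{\lambda_\op{L}}\op{L}-\op{P}_\op{L},
\]
and one differentiates each summand using (i) and (v). For (viii) I differentiate $D^2\lambda_\op{L}(\op{M})=-2\phi_\op{L}\op{M}\op{S}_\op{L}\op{M}u_\op{L}$ using the Leibniz rule; the contributions from $D\phi_\op{L}(\op{M})$ and $Du_\op{L}(\op{M})$ are negatives of each other once $a_\op{L}$ is discarded, giving $-2\phi_\op{L}\op{M}\op{S}_\op{L}\op{M}\op{S}_\op{L}\op{M}u_\op{L}$ twice, while the contribution of $D\op{S}_\op{L}(\op{M})$ produces (using (vii)) exactly $-\phi_\op{L}\op{M}\op{S}_\op{L}\op{M}\op{S}_\op{L}\op{M}u_\op{L} + 3(\phi_\op{L}\op{M}u_\op{L})\,\phi_\op{L}\op{M}\op{S}_\op{L}^2\op{M}u_\op{L}$; collecting these and multiplying by $-2$ yields the claimed expression $6\phi_\op{L}\op{M}\op{S}_\op{L}[\op{M}-\phi_\op{L}\op{M}u_\op{L}]\op{S}_\op{L}\op{M}u_\op{L}$.

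The delicate formula is (vii), because $\op{S}_\op{L}$ is defined through an inverse on the variable subspace $G_\op{L}$, so one cannot differentiate it as a plain composition. The plan is to determine $D\op{S}_\op{L}(\op{M})$ piece by piece using the three algebraic identities
\[
\pi_\op{L}\op{S}_\op{L}=\op{S}_\op{L},\qquad \op{S}_\op{L}\pi_\op{L}=\op{S}_\op{L},\qquad (\op{L}-\lambda_\op{L})\op{S}_\op{L}=\pi_\op{L},
\]
all three valid on all of $\fX$. Differentiating the first gives $\op{P}_\op{L}D\op{S}_\op{L}(\op{M})=D\pi_\op{L}(\op{M})\op{S}_\op{L}$, and using (vi) together with $\phi_\op{L}\op{S}_\op{L}=0$ this simplifies to $[\phi_\op{L}\op{M}\op{S}_\op{L}^2(\cdot)]u_\op{L}$. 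Differentiating the third and applying $\op{S}_\op{L}$ on the left (using $\op{S}_\op{L}(\op{L}-\lambda_\op{L})=\pi_\op{L}$) gives $\pi_\op{L}D\op{S}_\op{L}(\op{M})=\op{S}_\op{L}D\pi_\op{L}(\op{M})-\op{S}_\op{L}\op{M}\op{S}_\op{L}+(\phi_\op{L}\op{M}u_\op{L})\op{S}_\op{L}^2$; using (vi) and $\op{S}_\op{L}u_\op{L}=0$ the first term reduces to $\phi_\op{L}(\cdot)\op{S}_\op{L}^2\op{M}u_\op{L}$. Adding the $\op{P}_\op{L}$- and $\pi_\op{L}$-parts produces exactly the formula in (vii). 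The main obstacle is precisely this step, where one must keep careful track of which identity holds on which subspace and exploit $\phi_\op{L}\op{S}_\op{L}=0$ and $\op{S}_\op{L}u_\op{L}=0$ to eliminate spurious terms; once this is done the remaining items follow mechanically from the product rule.
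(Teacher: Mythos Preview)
Your approach is essentially the same as the paper's: items (i)--(iii) are quoted, items (iv)--(vi) and (ix) are obtained by Leibniz, item (vii) by differentiating the defining identities for $\op{S}_\op{L}$ and separating the $\op{P}_\op{L}$- and $\pi_\op{L}$-components, and item (viii) by differentiating (iv) and plugging in (vii). The paper obtains the $u_\op{L}$-component of $D\op{S}_\op{L}(\op{M})$ by differentiating $\phi_\op{L}\op{S}_\op{L}=0$ rather than $\pi_\op{L}\op{S}_\op{L}=\op{S}_\op{L}$, but this is the same computation.

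One slip in your write-up of (viii): the non-$a_\op{L}$ parts of the $D\phi_\op{L}$ and $Du_\op{L}$ contributions are \emph{equal}, not negatives of each other (it is only the $a_\op{L}$-parts that cancel). Concretely, before the overall $-2$ factor each of them contributes $-\phi_\op{L}\op{M}\op{S}_\op{L}\op{M}\op{S}_\op{L}\op{M}u_\op{L}$, so their sum is $-2\phi_\op{L}\op{M}\op{S}_\op{L}\op{M}\op{S}_\op{L}\op{M}u_\op{L}$; adding the $D\op{S}_\op{L}$ contribution $-\phi_\op{L}\op{M}\op{S}_\op{L}\op{M}\op{S}_\op{L}\op{M}u_\op{L}+3(\phi_\op{L}\op{M}u_\op{L})\phi_\op{L}\op{M}\op{S}_\op{L}^2\op{M}u_\op{L}$ and then multiplying by $-2$ indeed yields the claimed formula. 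The method and the final answer are correct; only the verbal bookkeeping needs tightening.
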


In the bracket of the second-to-last item, the scalar $\phi_\op{L}\op{M} u_\op{L}$ is to be interpreted as the scalar operator $(\phi_\op{L}\op{M} u_\op{L}) \Id$.

\begin{proof}
The first three items have been proved above.
Differentiating $D\lambda_\op{L}(\op{M})=\phi_\op{L}\op{M} u_\op{L}$ we get:
\begin{align*}
D^2\lambda_\op{L}(\op{M})
  &=\Big(-\phi_\op{L}\op{M}\op{S}_\op{L}-a_\op{L}(\op{M})\phi_\op{L}\Big)\op{M}u_\op{L}+\phi_\op{L}\op{M} \Big(-\op{S}_\op{L}\op{M}u_\op{L}+a_\op{L}(\op{M})u_\op{L}\Big)\\
  &= -2\phi_\op{L}\op{M}\op{S}_\op{L}\op{M}u_\op{L}-a_\op{L}(\op{M})\phi_\op{L}\op{M}u_\op{L}+\phi_\op{L}\op{M}(a_\op{L}(\op{M})u_\op{L})\\
   &=-2\phi_\op{L}\op{M}\op{S}_\op{L}\op{M}u_\op{L}.
\end{align*}
The formula for $D\op{P}$ is obtained similarly by differentiating $\op{P}_\op{L}=\phi_\op{L}(\cdot)u_\op{L}$, with some caution: the terms $-a_\op{L}(\op{M})\phi_\op{L}(\cdot)u_\op{L}$ and $\phi_\op{L}(\cdot)a_\op{L}(\op{M})u_\op{L}$ do cancel out, but the two remaining terms 
$-\phi_\op{L}  \op{M} \op{S}_\op{L}(\cdot) u_\op{L}$ and $-\phi_\op{L}(\cdot)\op{S}_\op{L}Mu_\op{L}$ are quite different: the first one maps $x\in\fX$ to $-(\phi_\op{L}  \op{M} \op{S}_\op{L}x) u_\op{L}\in \langle u_\op{L}\rangle$,  while the second maps it to $-(\phi_\op{L}x)\op{S}_\op{L}Mu_\op{L}\in G_\op{L}$.
Differentiating $\pi_\op{L}=\Id-\op{P}_\op{L}$ with respect to $\op{L}$ (observe that $\Id$ is a constant), we get $D\pi_\op{L} = -D\op{P}_\op{L}$.

To treat $\op{S}$, one first differentiates $(\op{L}-\lambda_\op{L})\op{S}_\op{L} = \pi_\op{L}$ to get:
\[(\op{L}-\lambda_\op{L})D\op{S}_\op{L}(\op{M}) + (\op{M}-\phi_\op{L}\op{M}u_\op{L})\op{S}_\op{L}
  = \phi_\op{L}  \op{M} \op{S}_\op{L}(\cdot) u_\op{L} + \phi_\op{L}(\cdot)\op{S}_\op{L}Mu_\op{L} \]
and obtains
\begin{align*}
(\op{L}-\lambda_\op{L})D\op{S}_\op{L}(\op{M}) &= -\op{M}\op{S}_\op{L} +\phi_\op{L}  \op{M} \op{S}_\op{L}(\cdot) u_\op{L} +(\phi_\op{L}\op{M}u_\op{L})\op{S}_\op{L}+ \phi_\op{L}(\cdot)\op{S}_\op{L}Mu_\op{L}\\
  &= -\pi_\op{L} \op{M}\op{S}_\op{L} +(\phi_\op{L}\op{M}u_\op{L})\op{S}_\op{L}+ \phi_\op{L}(\cdot)\op{S}_\op{L} \op{M} u_\op{L}.
\end{align*}
Composing on the left by $\op{S}_\op{L}$ and observing that $\op{S}_\op{L}(\op{L}-\lambda_\op{L}) = \pi_\op{L}$ and $\op{S}_\op{L}\pi_\op{L}=\op{S}_\op{L}$ and using linearity to pull scalar expressions out of operator arguments, it follows that
\begin{equation}
\pi_\op{L}D\op{S}_\op{L}(\op{M}) = -\op{S}_\op{L} \op{M}\op{S}_\op{L} +(\phi_\op{L}\op{M}u_\op{L})\op{S}_\op{L}^2+ \phi_\op{L}(\cdot)\op{S}_\op{L}^2 \op{M} u_\op{L}.
\label{eq:DS1}
\end{equation}
Then one differentiates $\phi_\op{L}\op{S}_\op{L}\equiv 0$ to obtain
\[\phi_\op{L} D\op{S}_\op{L}(\op{M})=\phi_\op{L}\op{M}\op{S}_\op{L}^2,\]
giving the $u_\op{L}$ component of $D\op{S}_\op{L}(\op{M})$. Combining this information with \eqref{eq:DS1}, the claimed formula follows.

Last, differentiating $D^2\lambda_\op{L}(\op{M}) = -2 \phi_\op{L}  \op{M}  \op{S}_\op{L} \op{M} u_\op{L}$ we arrive at
\begin{align*}
D^3\lambda_\op{L}(\op{M}) &= -2D\phi_\op{L}(\op{M}) \op{M} \op{S}_\op{L} \op{M} u_\op{L} - 2\phi_\op{L} \op{M} D\op{S}_\op{L}(\op{M}) \op{M} u_\op{L}-2\phi_\op{L}  \op{M}  \op{S}_\op{L} \op{M} Du_\op{L}(\op{M}) \\
&=6\phi_\op{L}  \op{M}  \op{S}_\op{L} \op{M}  \op{S}_\op{L} \op{M} u_\op{L}-6(\phi_\op{L} \op{M}\op{S}_\op{L}^2\op{M}u_\op{L})(\phi_\op{L}\op{M} u_\op{L})
\end{align*}
which factorizes as stated.

Finally, the definition of $\op{R}$ can be rewritten as $\frac{1}{\lambda_{\op{L}}}\op{R}_{\op{L}}=\frac{\op{L}}{\lambda_{\op{L}}}-\op{P}_{\op{L}}$, from which the derivative follows.
\end{proof}

\begin{rema}
The expression of $D^2\lambda_\op{L}$ in Proposition \ref{prop:derivatives} generalizes (and simplifies part of the proof of) Theorem C in \cite{GKLM}. The only part needing additional work from this expression is to work out the splitting of the space into the direct sum of the tangent space to the normalized potentials and the set of coboundaries and constants. The various reformulations of the expression are then classical.
\end{rema}

\section{Building estimates from the differentiation formulas}
\label{sec:estimates}

We can now bound all above derivatives in terms of the two fundamental  quantities
\[\tau_\op{L} = \lVert \phi_\op{L}\rVert \lVert u_\op{L}\rVert = \lVert \op{P}_\op{L}\rVert \quad\mbox{and}\quad \gamma_\op{L} = \lVert \op{S}_\op{L}\rVert \]
(recall we normalized $\phi_\op{L}$ to ensure $\lvert \phi_\op{L} u_\op{L}\rvert =1$).

\begin{prop}\label{prop:local-bounds}
At each $\op{L}$ near $\op{L}_0$ we have
\begin{enumerate}
\item $\lVert\pi_\op{L}\rVert \le 1+\tau_\op{L}$ and $\lVert \pi_\op{L}^*\rVert\le 1+\tau_\op{L}$,
\item $\lVert D\lambda_\op{L}\rVert = \tau_\op{L}$,
\item $\lVert D^2\lambda_\op{L}\rVert \le 2\gamma_\op{L}\tau_\op{L}$,
\item $\lVert D\pi_\op{L} \rVert \le 2\gamma_\op{L}\tau_\op{L}$,
\item $\lVert D\op{P}_\op{L} \rVert \le 2\gamma_\op{L}\tau_\op{L}$,
\item\label{enumi:Dgamma} $\lVert D\op{S}_\op{L} \rVert \le \gamma_\op{L}^2(1+3\tau_\op{L})$,
\item $\lVert D^3\lambda_\op{L} \rVert \le 6\gamma_\op{L}^2\tau_\op{L}(1+\tau_\op{L})$,
\item as soon as $\lambda_\op{L}\neq0$, $\Big\lVert D\big[\frac{1}{\lambda} \op{R}\big]_\op{L}\Big\rVert\le \frac{1}{\lvert \lambda_\op{L}\rvert}+\frac{\tau_\op{L}}{\lvert\lambda_\op{L}\rvert^2}\lVert\op{L}\rVert+2\tau_\op{L}\gamma_\op{L}$.
\end{enumerate}
\end{prop}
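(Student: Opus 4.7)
The plan is to read off each bound directly from the corresponding explicit formula in Proposition \ref{prop:derivatives}, applying only elementary operator-norm inequalities together with the two defining identities $\tau_\op{L} = \lVert\phi_\op{L}\rVert\lVert u_\op{L}\rVert$ (using the normalization $\phi_\op{L}u_\op{L}=1$) and $\gamma_\op{L} = \lVert\op{S}_\op{L}\rVert$. Every derivative formula in Proposition \ref{prop:derivatives} is a sum of terms of the form (scalar quantity) $\cdot$ (composition of $\op{S}_\op{L}$'s, $\op{M}$'s, $\phi_\op{L}$'s, $u_\op{L}$'s), so each term is bounded by a product of the relevant norms. The coefficients in the stated inequalities are then essentially bookkeeping.

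First I would do the easy structural items: item (i) follows from $\pi_\op{L}=\Id-\op{P}_\op{L}$ and the triangle inequality (and the dual case from $\lVert\pi_\op{L}^*\rVert = \lVert\pi_\op{L}\rVert$). For item (ii), the formula $D\lambda_\op{L}(\op{M})=\phi_\op{L}\op{M}u_\op{L}$ immediately gives $\lVert D\lambda_\op{L}\rVert \le \tau_\op{L}$; to get equality I would invoke Hahn--Banach to pick $\psi\in\fX^*$ of norm $1$ with $\psi(u_\op{L})=\lVert u_\op{L}\rVert$, then pick $v$ of norm $1$ with $|\phi_\op{L}(v)|$ arbitrarily close to $\lVert\phi_\op{L}\rVert$, and test against the rank-one operator $\op{M}=\psi(\cdot)v$ of norm $1$, whose image under $D\lambda_\op{L}$ is $\psi(u_\op{L})\phi_\op{L}(v)$, of modulus arbitrarily close to $\tau_\op{L}$.

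The remaining items are bounded term by term. For (iii), $D^2\lambda_\op{L}(\op{M})=-2\phi_\op{L}\op{M}\op{S}_\op{L}\op{M}u_\op{L}$ yields $|D^2\lambda_\op{L}(\op{M})|\le 2\lVert\phi_\op{L}\rVert\,\gamma_\op{L}\,\lVert u_\op{L}\rVert\,\lVert\op{M}\rVert^2 = 2\tau_\op{L}\gamma_\op{L}\lVert\op{M}\rVert^2$. For (iv) and (v), $D\op{P}_\op{L}(\op{M})$ and $D\pi_\op{L}(\op{M})$ are each sums of two rank-one operators $\phi_\op{L}\op{M}\op{S}_\op{L}(\cdot)u_\op{L}$ and $\phi_\op{L}(\cdot)\op{S}_\op{L}\op{M}u_\op{L}$, each of operator norm at most $\tau_\op{L}\gamma_\op{L}\lVert\op{M}\rVert$. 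For (vi), the four summands in $D\op{S}_\op{L}(\op{M})$ contribute $\gamma_\op{L}^2$, $\tau_\op{L}\gamma_\op{L}^2$, $\tau_\op{L}\gamma_\op{L}^2$, $\tau_\op{L}\gamma_\op{L}^2$ respectively (the first is $\op{S}_\op{L}\op{M}\op{S}_\op{L}$, the other three carry an extra factor of $\lVert\phi_\op{L}\rVert\lVert u_\op{L}\rVert$). For (vii), in $D^3\lambda_\op{L}(\op{M})=6\phi_\op{L}\op{M}\op{S}_\op{L}[\op{M}-\phi_\op{L}\op{M}u_\op{L}]\op{S}_\op{L}\op{M}u_\op{L}$, the bracketed operator has norm at most $(1+\tau_\op{L})\lVert\op{M}\rVert$ since $|\phi_\op{L}\op{M}u_\op{L}|\le\tau_\op{L}\lVert\op{M}\rVert$ and the scalar is to be read as a multiple of the identity, and the rest of the expression contributes $6\tau_\op{L}\gamma_\op{L}^2\lVert\op{M}\rVert^2$. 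Finally for (viii), the four summands in the formula for $D[\tfrac{1}{\lambda}\op{R}]_\op{L}(\op{M})$ have norms $|\lambda_\op{L}|^{-1}$, $\tau_\op{L}|\lambda_\op{L}|^{-2}\lVert\op{L}\rVert$, $\tau_\op{L}\gamma_\op{L}$, $\tau_\op{L}\gamma_\op{L}$ (per unit $\lVert\op{M}\rVert$), which sum to the stated bound.

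There is no real obstacle here: the work is all done once Proposition \ref{prop:derivatives} is in hand. The only points requiring a moment of care are the interpretation of the scalar $\phi_\op{L}\op{M}u_\op{L}$ as the scalar operator $(\phi_\op{L}\op{M}u_\op{L})\Id$ in item (vii) (so that the bracket is bounded as an operator rather than as a scalar), and the fact that each rank-one piece $\psi(\cdot)v$ has operator norm exactly $\lVert\psi\rVert\lVert v\rVert$, which is what allows the clean factorization of $\tau_\op{L}$ throughout.
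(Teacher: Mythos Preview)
Your proposal is correct and follows essentially the same approach as the paper: both derive every bound by applying elementary operator-norm inequalities term by term to the explicit formulas of Proposition~\ref{prop:derivatives}, and both invoke Hahn--Banach via a suitable rank-one perturbation to obtain the equality $\lVert D\lambda_\op{L}\rVert=\tau_\op{L}$. Your write-up is simply more detailed than the paper's, which contents itself with one representative computation and the Hahn--Banach remark.
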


\begin{proof}
All bounds follow  directly from the expressions given in \ref{prop:derivatives}, for example 
\[\lVert D\lambda_\op{L}(\op{M})\rVert = \lVert \phi_\op{L}\op{M} u_\op{L}\rVert \le \lVert \phi_\op{L}\rVert \lVert u_\op{L}\rVert \lVert\op{M}\rVert = \tau_\op{L} \lVert\op{M}\rVert.\]
To get the equality $\lVert D\lambda_\op{L}\rVert=\tau_\op{L}$, one simply considers perturbations $\op{M}$ of unit norm that send $u_\op{L}$ to vectors of the same norm on which $\phi_\op{L}$ almost realizes its norm (such $\op{M}$ exist by the Hahn-Banach Theorem).
\end{proof}

\begin{rema}
Some of the constants above can be improved if we know more about $\fX$, for example if it is a Hilbert space. Indeed, the two terms given in Proposition \ref{prop:derivatives} for $D\pi_\op{L}(\op{M})(x)$ are its component $-\big[\phi_\op{L}\op{M}\op{S}_\op{L}(x)\big]u_\op{L}$ in the direction of $u_\op{L}$ and $-\phi_\op{L}(x)\big[\op{S}_\op{L}\op{M} u_\op{L}\big]$ in $G_\op{L}$; in a Hilbert space under a good bound on $\tau$,  we know that the two terms are close to being orthogonal and so the norm of their sum must be somewhat lower than the sum of their norm.
This kind of argument can be more generally used in a space with some uniform convexity estimates. We do not pursue these improvements because they only apply in restrictive cases and we expect them to be quite modest.
\end{rema}

We arrive at our core result, which will enable us to control $\tau$ and $\gamma$ and in turn all eigendata.
\begin{coro}\label{coro:core}
We have $\llip \tau\le 2\gamma\tau$ and $\llip \gamma\le \gamma^2(1+3\tau)$.
\end{coro}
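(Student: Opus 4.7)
The proof will be essentially a one-line application of two facts already established in the excerpt, so the ``plan'' is mostly to identify the right ingredients and combine them.

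First, I would recall from the metric derivative subsection (Section \ref{sec:metric}) the key compatibility between the metric derivative and the norm on a Banach-valued analytic map: for any differentiable $f$ valued in a Banach space, $\llip \lVert f\rVert \le \lVert Df \rVert$. Since the maps $\op{L}\mapsto \op{P}_\op{L}$ and $\op{L}\mapsto \op{S}_\op{L}$ are analytic on the neighborhood $\fspace{V}$ where the eigendata are defined (this was established via the Implicit Function Theorem in Section \ref{sec:derivatives}), this inequality applies to $\tau_\op{L} = \lVert \op{P}_\op{L}\rVert$ and $\gamma_\op{L} = \lVert \op{S}_\op{L}\rVert$.

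Next, I would simply invoke items (5) and (6) of Proposition \ref{prop:local-bounds}, namely $\lVert D\op{P}_\op{L}\rVert \le 2\gamma_\op{L}\tau_\op{L}$ and $\lVert D\op{S}_\op{L}\rVert \le \gamma_\op{L}^2(1+3\tau_\op{L})$. Chaining the two inequalities gives
\[\llip \tau (\op{L}) = \llip \lVert \op{P}\rVert (\op{L}) \le \lVert D\op{P}_\op{L}\rVert \le 2\gamma_\op{L}\tau_\op{L}\]
and
\[\llip \gamma (\op{L}) = \llip \lVert \op{S}\rVert (\op{L}) \le \lVert D\op{S}_\op{L}\rVert \le \gamma_\op{L}^2(1+3\tau_\op{L}),\]
which is exactly the claim.

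There is essentially no obstacle: the substantive work has been done already in computing the derivative formulas of Proposition \ref{prop:derivatives} and bounding them in Proposition \ref{prop:local-bounds}. The only conceptual point worth flagging is that, although $\tau$ and $\gamma$ are \emph{not} differentiable as functions of $\op{L}$ (which is why the metric derivative $\llip$ was introduced in the first place), the compositions inside the norm \emph{are} differentiable, so the inequality $\llip \lVert f \rVert \le \lVert Df\rVert$ is directly applicable and yields the stated differential inequalities that will be fed into the comparison Proposition \ref{prop:comparison} in the sequel.
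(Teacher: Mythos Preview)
Your proof is correct and follows essentially the same route as the paper. The only minor variation is in the treatment of $\tau$: the paper exploits the \emph{equality} $\tau_\op{L}=\lVert D\lambda_\op{L}\rVert$ from Proposition~\ref{prop:local-bounds}(ii) and then bounds $\llip\lVert D\lambda\rVert\le\lVert D^2\lambda\rVert\le 2\gamma\tau$, whereas you use the defining identity $\tau_\op{L}=\lVert \op{P}_\op{L}\rVert$ and bound $\llip\lVert \op{P}\rVert\le\lVert D\op{P}\rVert\le 2\gamma\tau$; both yield the same inequality. For $\gamma$ your argument is identical to the paper's.
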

Here $\llip$ denotes the metric derivative (also known as the local Lipschitz constant) defined in Section \ref{sec:metric}.
\begin{proof}
By Proposition \ref{prop:local-bounds} we have $\tau_\op{L}=\lVert D\lambda_\op{L}\rVert$, and then $\llip\tau = \llip \lVert D\lambda\rVert \le \lVert D^2\lambda\rVert \le 2\gamma\tau$.
Since $\gamma=\lVert\op{S}\rVert$ we have $\llip \gamma\le \lVert D\op{S}\rVert\le \gamma^2(1+3\tau)$.
\end{proof}

There are \emph{a priori} several ways to combine these bounds together; optimally, one would compare $\tau_\op{L}$ and $\gamma_\op{L}$ to the values $t(r)$, $g(r)$ at $r=\lVert\op{L}-\op{L}_0\rVert$ of the solutions $t,g$ to the differential system
\begin{equation}
\begin{cases}
t'=2tg \\
g' = g^2(1+3t) \\
t(0) = \tau_0 \quad\&\quad g(0)=\gamma_0.
\end{cases}
\label{eq:system}
\end{equation} 
However the solutions of this system are unlikely to have a nice expression, and the explosion time might be difficult to express exactly.

Instead, we accept to loose a little ground for the sake of simplicity and usability. This leads us to the following.
\begin{coro}\label{coro:core2}
We have $\llip (\gamma\tau) \le 6 \gamma^2\tau^2$.
\end{coro}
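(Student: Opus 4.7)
The plan is to apply the product rule for the metric derivative stated in Section \ref{sec:metric}, namely $\llip(fg) \le (\llip f)|g| + |f|(\llip g)$, to the product $\gamma\tau$, and then substitute the two bounds from Corollary \ref{coro:core}. This gives
\[
\llip(\gamma\tau) \le (\llip\gamma)\tau + \gamma(\llip\tau) \le \gamma^2(1+3\tau)\tau + \gamma\cdot 2\gamma\tau = 3\gamma^2\tau + 3\gamma^2\tau^2.
\]

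To turn the right-hand side into $6\gamma^2\tau^2$ we need $\tau \ge 1$. This is automatic from the definition: since $\op{P}_\op{L}$ is a projection (onto a nonzero subspace), its operator norm is at least $1$, and $\tau_\op{L} = \lVert\op{P}_\op{L}\rVert$. Equivalently, the inequality $|\phi_\op{L} u_\op{L}| \le \lVert\phi_\op{L}\rVert\lVert u_\op{L}\rVert$ directly gives $\tau_\op{L}\ge 1$. Hence $1+\tau \le 2\tau$ and
\[
3\gamma^2\tau + 3\gamma^2\tau^2 = 3\gamma^2\tau(1+\tau) \le 6\gamma^2\tau^2,
\]
which is the claim.

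There is no real obstacle here: the statement is essentially a mechanical consequence of Corollary \ref{coro:core} via the Leibniz-type inequality for $\llip$. The only substantive point is recognizing that the bound $\tau\ge 1$ must be invoked to consolidate the two terms into a single monomial; without it the natural bound is $3\gamma^2\tau(1+\tau)$, which is sharper but less convenient for feeding into Proposition \ref{prop:comparison} later. Trading this sharpness for the homogeneous bound $6\gamma^2\tau^2$ is precisely the "loosing a little ground for simplicity and usability" mentioned just before the statement, since the resulting ODE $s' = 6s^2$ with $s=\gamma\tau$ has an elementary closed-form solution.
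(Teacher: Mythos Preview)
Your proof is correct and follows the same route as the paper: apply the Leibniz-type inequality for $\llip$ to $\gamma\tau$, substitute the bounds from Corollary \ref{coro:core}, and use $\tau\ge 1$ to absorb $3\gamma^2\tau$ into $3\gamma^2\tau^2$. Your version is in fact slightly cleaner, as you write the product rule correctly and justify $\tau\ge 1$ explicitly.
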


\begin{proof}
We simply observe
\begin{align*}
\llip (\gamma\tau) &\le (\llip\gamma)\tau +\gamma(\llip \tau) = \gamma^2(1+3\tau)\tau + \gamma\cdot 2\gamma\tau = \gamma^2(3\tau^2+3\tau) \\
  &\le 6\gamma^2\tau^2
\end{align*}
since $1\le \tau$.
\end{proof}

\begin{rema}
Since $t$ is increasing, $g'\ge (1+3\tau_0)g^2$ and the explosion time of \eqref{eq:system} is at most $\frac{1}{(3\tau_0+1)\gamma_0}$ while we will get $\frac{1}{6\tau_0\gamma_0}$. This shows that the loss coming from this relaxation is modest.
\end{rema}

\section{End of the proofs of the main results}
\label{sec:proofs}

\begin{proof}[Proof of Theorems \ref{theo:perturbation} and \ref{theo:estimate1}]
First, we want to prove that every $\op{L}\in B(\op{L}_0,r_0)$ has an ASIE for $r_0$ as large as possible. To simplify we will look in one direction at a time: fix some $\op{M}\in\BX$, of norm $1$ say.
Set $\op{L}_r= \op{L}_0+r\op{M}$ and define
\[B=\big\{r_0\in[0,+\infty) \,\big|\, \exists \varepsilon>0,\, \forall r\in [0,r_0], \forall \op{L}\in B(\op{L}_r,\varepsilon): \op{L} \mbox{ has an ASIE } \big\}\]
(recall that implicitly the eigenvalue is required to be analytic, in particular continuous, on this neighborhood of a segment). By Proposition \ref{prop:open}, $B$ is open and $0\in B$.  By definition $B$ is an interval, so $B=[0,r_+)$ for some $r_+\in (0,+\infty]$, which \emph{a priori} depends on $\op{M}$ but that we intend to bound uniformly from below.

By abuse of notation, let $\tau,\gamma:B\to (0,\infty)$ be the functions sending $r$ to $\tau(r):=\tau_{\op{L}_r}$ and $\gamma(r):=\gamma_{\op{L}_r}$ respectively. Since $\lVert\op{M}\rVert=1$,  Corollary \ref{coro:core2} yields again $\llip (\tau\gamma) \le 6\tau^2\gamma^2$ in this notation.

By comparison (see Proposition \ref{prop:comparison}) we thus have $\tau\gamma\le w$ where $w$ is the solution of $w'=6w^2$ with $w(0)=\tau_0\gamma_0$, as long as $w$ is defined. Solving this equation explicitly, we conclude that for all $r\in B$ smaller than $\frac{1}{6\tau_0\gamma_0}$ it holds
\[\tau(r)\gamma(r) \le \Big(\frac{1}{\tau_0\gamma_0}-6r\Big)^{-1}=\frac{\tau_0\gamma_0}{1-6\tau_0\gamma_0 r}.\]

Assume by contradiction that $r_+<\frac{1}{6\tau_0\gamma_0}$. Then $\tau\gamma$ is uniformly bounded from above on $B$. Let us prove that $\tau$ and $\gamma$ are both uniformly bounded from above. First $\tau\ge 1$ so that $\gamma\le \tau\gamma$. Second, denoting by $\gamma_+$ a upper bound for $\gamma$, we have
$\llip \tau\le 2\gamma_+\tau$ so that again by comparison, $\tau(r)\le \tau_0\exp(r\gamma_+)$. 

Proposition \ref{prop:local-bounds} ensures that $\lambda$, $\pi$, $\op{P}$, $\op{S}$ are Lipschitz on a neighborhood of $\{\op{L}_r, r\in B\}$. Since $r_+$ is finite and $\fX$, $\BX$ are complete, these eigendata all have limits when $r\to r_+$. The limit of $\op{P}_{\op{L}_r}$ as $r\to r_+$ must be a rank-one projection to some direction $\langle u_{r_+}\rangle$, where $\op{L}_{r_+}$ has eigenvalue $\lim_{r_+} \lambda_{\op{L}_r}$. The limit of $\pi_{\op{L}_r}$ must be a projection to some subspace $G_{r_+}$ preserved by $\op{L}_{r_+}$. Since the relations $\op{P}_\op{L}\pi_\op{L}=0$ and $\op{P}_\op{L}+\pi_\op{L} =\Id$ pass to the limit, $G_{r_+}$ must be a complement to $\langle u_{r_+}\rangle$. The relation $(\op{L}-\lambda)\op{S}=\pi$ then also passes to the limit, and since $S$ and $\lVert S\rVert$ converge, $\op{L}_{r_+}$ has a simple isolated eigenvalue. Applying Proposition \ref{prop:open} to $\op{L}_{r_+}$ and using continuity of $\lambda$, we see that $r_+\in B$. This is a contradiction since $B$ is open and $r_+:=\sup B$.

At this point, since the bound $r_+\ge 1/6\tau_0\gamma_0$ does not depend on $\op{M}$, we have established that $\lambda$ (and the other eigendata) can be defined and is simple isolated on $B(\op{L}_0,r_0)$ with $r_0=\frac{1}{6\tau_0\gamma_0}$. In addition we get on this ball the bound 
\[\tau_\op{L}\gamma_\op{L} \le \Big(\frac{1}{\tau_0\gamma_0}-6\lVert\op{L}-\op{L}_0\rVert\Big)^{-1}=\frac{\tau_0\gamma_0}{1-6\tau_0\gamma_0 \lVert\op{L}-\op{L}_0\rVert}.\]
When $\lVert\op{L}-\op{L}_0\rVert\le (K-1)/6K\tau_0\gamma_0$, this implies
$\tau_\op{L}\gamma_\op{L} \le K\tau_0\gamma_0$.
In particular, $\tau$ is $2K\tau_0\gamma_0$-Lipschitz, so that
\[\tau\le\tau_0+2K\tau_0\gamma_0 \frac{K-1}{6K\tau_0\gamma_0} \le \tau_0+\frac{K-1}{3}.\]
Then $\lVert D\lambda_\op{L}\rVert\le \tau_\op{L}\le \tau_0+(K-1)/3$, $\lVert D\op{P}_\op{L}\rVert$, $\lVert D\pi_\op{L}\rVert$ and $\lVert D^2\lambda_\op{L}\rVert \le 2\tau_\op{L}\gamma_\op{L}\le 2K\tau_0\gamma_0$, and $\lVert D^3\lambda_\op{L}\rVert \le 6\tau_\op{L}\gamma_\op{L}^2(1+\tau_\op{L}) \le 12\tau_\op{L}^2\gamma_\op{L}^2\le 12K^2\tau_0^2\gamma_0^2$.
\end{proof}

\begin{proof}[Proof of Theorem \ref{theo:spectral-gap}]
The hypothesis ensures at least $\lVert\op{L}-\op{L}_0\rVert\le 1/6\tau_0\gamma_0$, so that $\op{L}$ has an ASIE $\lambda_\op{L}$. Let $x$ be a vector of $G_\op{L}$. Using that $\pi_\op{L}x=x$ and thus $\lVert\pi_0 x\lVert\le (1+\lVert\pi_\op{L}-\pi_0\rVert)\lVert x\rVert$ we get
\begin{align*}
\lVert \op{L}x\rVert &= \lVert \op{L}_0\pi_0x+\op{L}_0(\pi_\op{L}x-\pi_0x)+(\op{L}-\op{L_0})x\rVert \\
  &\le \lvert\lambda_0\rvert (1-\delta_0) \lVert\pi_0x\rVert+\lVert\op{L}_0\rVert \lVert\pi_\op{L}-\pi_0\rVert\lVert x\rVert+\lVert\op{L}-\op{L}_0\rVert \lVert x\rVert \\
  &\le  \Big(\lvert\lambda_0\rvert (1-\delta_0) + \lVert\pi_0-\pi_\op{L}\rVert\big(\lvert\lambda_0\rvert(1-\delta_0) + \lVert\op{L}_0\rVert\big) + \lVert\op{L}-\op{L}_0\rVert\Big) \lVert x\rVert.
\end{align*}
Now, if $\lVert\op{L}-\op{L}_0\rVert\le (K-1)/6K\tau_0\gamma_0$ for some $K>1$, we have $\lVert\pi_\op{L}-\pi_0\rVert \le \frac{1}{3}(K-1)$ and, using the explicit-remainder first order Taylor formula of Theorem \ref{theo:estimate1} and $\lVert D\lambda_0\rVert=\tau_0$ we get
\[\lvert\lambda_\op{L}\rvert  \ge \lvert\lambda_0\rvert-\frac{(K-1)}{6K\gamma_0}-\frac{(K-1)^2}{36K\tau_0\gamma_0}
\]
so that to ensure $\lVert\op{L}x\rVert \le (1-\delta)\lVert\lambda_\op{L}\rVert \lVert x\rVert$ it suffices to have
\[\lvert\lambda_0\rvert (1-\delta_0) + \frac13(K-1)\big(\lvert\lambda_0\rvert(1-\delta_0) + \lVert\op{L}_0\rVert\big) + \frac{K-1}{6K\tau_0\gamma_0} \le (1-\delta)\Big(\lvert\lambda_0\rvert-\frac{(K-1)}{6K\gamma_0}-\frac{(K-1)^2}{36K\tau_0\gamma_0}\Big)\]
or equivalently,
\[6K \lvert\lambda_0\rvert(\delta-\delta_0)+a K(K-1)+\frac{K-1}{\tau_0\gamma_0}+\frac{(K-1)(1-\delta)}{\gamma_0}+\frac{(K-1)^2(1-\delta)}{6\tau_0\gamma_0} \le0 \]
which, writing $K=K-1+1$, can be rewritten as:
\[(K-1)^2\big(a+\frac{1-\delta}{6\tau_0\gamma_0} \big) +(K-1)\big(6\lvert\lambda_0\rvert(\delta-\delta_0) + a+\frac{1-\delta}{\gamma_0}+\frac{1}{\tau_0\gamma_0} \big)+6\lvert\lambda_0\rvert (\delta-\delta_0)\le 0.\]
\end{proof}

\begin{proof}[Proof of Corollary \ref{coro:gap-short}]
We observe that increasing the first two coefficients in the polynomial of Theorem \ref{theo:spectral-gap} must reduce the value of its positive root. We thus seek simple upper bounds for these two first coefficients. 

We also observe that in all the above we can replace $\gamma_0$ by any larger number $\gamma'_0$, as soon as we make the replacement in both the hypotheses and the conclusions. Here we use $\gamma_0\le \gamma_0'=\lVert\pi_0\rVert/\delta_0$ obtained by 
\[\lVert S_0\rVert \le \lVert(1-\op{L}_0)^{-1} \rVert \lVert\pi_0\rVert\le \lVert\pi_0\rVert\sum_{k\ge0} \lVert (\op{L}_0)_{|G_0}^{k} \rVert \le \lVert\pi_0\rVert\sum_{k\ge0} (1-\delta_0)^k = \lVert\pi_0\rVert/\delta_0.\]
Then we have
\[a+\frac{1-\delta}{6\tau_0\gamma'_0} \le 4+\frac16 = \frac{25}{6}.\]
Then discarding the negative term $\delta-\delta_0$, we have
\[6\lvert\lambda_0\rvert(\delta-\delta_0)+a+\frac{1-\delta}{\tau_0}+\frac{1}{\tau_0\gamma'_0} \le 6.\]
It follows that under the extra assumptions of Corollary \ref{coro:gap-short} we can replace $\rho(\delta)$ in Theorem \ref{theo:spectral-gap} by the root of
\[\frac{25}{6} X^2 + 6 X + 6(\delta-\delta_0),\]
which (factoring $6$ and using $\sqrt{1+x}\le 1+x/2$) satisfies
\[\rho'(\delta)=\frac{-6+\sqrt{36+100(\delta_0-\delta)}}{\frac{25}{3}}\le \delta_0-\delta.\]
\end{proof}

\begin{proof}[Proof of Corollary \ref{coro:gap-ne}]
By hypothesis $\op{L}_0$ has a spectral gap of some size $\delta_0$ with constant $C_0$, and it follows that some power $n_0=O_{*C_0,\delta_0^{-1}}(1)$ of $\op{L}_0$ has a spectral gap (of arbitrary size, say  $1/2$) with constant $1$.
We have $\tau_{\op{L}_0^{n_0}}=\tau_0$. Writing \[\lambda_0^{n_0}-\op{L}_0^{n_0} = (\lambda_0-\op{L}_0)\sum\lambda_0^k\op{L}_0^{n_0-1-k}\]
and observing that $\lVert\op{L}_0\rVert$ is controlled by $\lambda_0$, $\tau_0$ and $C_0$, we get $\gamma_{\op{L}_0^{n_0}}= O_{*C_0,\delta_0^{-1},\lvert\lambda_0\rvert,\tau_0}(\gamma_0)$.
Applying Corollary \ref{coro:gap-short} for all $\op{M}\in\BX$ such that 
\[\lVert(\op{L}_0+\op{M})^{n_0}-\op{L}_0^{n_0}\rVert = O_{*C_0,\delta_0^{-1},\tau_0,\lvert\lambda_0\rvert}(1)\]
we have that $(\op{L}_0+\op{M})^{n_0}$ has a spectral gap (of size $1/4$ say) with constant $1$. This implies that $\op{L}_0+\op{M}$ has a spectral gap of size $O_{*n_0}(1)=O_{*C_0,\delta_0^{-1}}(1)$, with a constant $O_{*C_0,\delta_0^{-1},\tau_0,\lvert\lambda_0\rvert}(1)$.

Developing $(\op{L}_0+\op{M})^{n_0}$, since $\lVert\op{L}_0\rVert=O_{*C_0,\delta_0^{-1},\tau_0,\lvert\lambda_0\rvert}(1)$, we see that
\[(\op{L}_0+\op{M})^{n_0}-\op{L}_0^{n_0} =O_{*C_0,\delta_0^{-1},\tau_0,\lvert\lambda_0\rvert}(\lVert\op{M}\rVert)\]
and the spectral gap is ensured for $\op{M}=O_{*C_0,\delta_0^{-1},\tau_0,\lvert\lambda_0\rvert}(1)$.
\end{proof}

\bibliographystyle{amsalpha}
\bibliography{perturbation}

\end{document}